\newtheorem{theorem}{Theorem}[section]
\newtheorem{lemma}[theorem]{Lemma}
\newtheorem{remark}[theorem]{Remark}
\numberwithin{equation}{section}
\def\Vo{\vbox{\offinterlineskip\hbox{\kern 3pt$\scriptstyle\circ$}
\kern 1pt\hbox{$V$}}}
\def\Ho{\vbox{\offinterlineskip\hbox{\kern 3pt$\scriptstyle\circ$}
\kern 1pt\hbox{$H$}}}
\def\Wo{\vbox{\offinterlineskip\hbox{\kern 3pt$\scriptstyle\circ$}
\kern 1pt\hbox{$W$}}}
\newcommand{
  {\resizebox{}{!}{\input .pstex_t}}
}[2]{
  {\resizebox{#1}{!}{\input #2.pstex_t}}
}
\def\eqldef{\overset{\text{\tiny \rm def}}{=}}
\def\tra{\mathsf{T}}
\newcommand{\abs}[2][{}]{\lvert#2\rvert_{#1}}
\def\dd{\;\!\mathrm{d}}    % d for integrals
\newcommand{\Er}{\mathbb{R}}
\newcommand{\En}{\mathbb{N}}
\newcommand{\ve}{\varepsilon}
\newcommand{\norm}[2][{}]{\lVert#2\rVert_{{#1}}}
\renewcommand{\tilde}{\widetilde}
\newcommand{\dive}[1]{\operatorname{div}#1}
\def\tra{\mathsf T}
\def\eqldef{\overset{\text{\tiny\rm def}}{=}}
\def\L{{\rm{L}}}
\def\W{{\rm{W}}}
\def\C{{\mathrm{C}}}
\def\L{{\mathrm{L}}}
\def\V{{\mathrm{V}}}
\def\X{{\mathrm{X}}}
\def\Y{{\mathrm{Y}}}
\def\W{{\mathrm{W}}}
\newcommand{\Del}{\Delta}
\newcommand{\vrR}{\varrho_{R}}
\newcommand{\vuR}{\vu_R}
\newcommand{\vrd}{\varrho_{\delta}}
\newcommand{\vud}{\vu_{\delta}}
\newcommand{\ove}{\bar{\vu}_\ep}
\newcommand{\bfu}{\mathbf{u}}
\newcommand{\ueh}{\vc{u}_{h,\ep}}
\newcommand{\Ve}{V_{\ep}}
\newcommand{\uh}{\vu_h}
\newcommand{\bFormula}[1]{
\begin{equation} \label{#1}}
\newcommand{\eF}{\end{equation}}
\newcommand{\Ov}[1]{\overline{#1}}
\newcommand{\DC}{\C^\infty_c}
\newcommand{\vr}{\varrho}
\newcommand{\vre}{\vr_\ep}
\newcommand{\vue}{\vu_\ep}
\newcommand{\vu}{\vc{u}}
\newcommand{\vc}[1]{{\bf #1}}
\newcommand{\Qme}{Q_\ep}
\newcommand{\Div}{{\rm div}}
\newcommand{\Grad}{\nabla}
\newcommand{\dx}{\,{\rm d} {x}}
\newcommand{\intO}[1]{\int_{\Omega} #1\,\dx}
\newcommand{\intQ}[1]{\int_{Q} #1\,\dx}
\newcommand{\intQe}[1]{\int_{Q_\ep} #1\,\dx}
\newcommand{\intRN}[1]{\int_{\Er^N} #1\,\dx}
\newcommand{\intRNN}[1]{\int_{\Er^{N}} #1\,\dx}
\newcommand{\ep}{\varepsilon}
\newcommand{\R}{\mathbb{R}}
\begin{document}

\title{A rigorous derivation of the stationary
compressible Reynolds equation via the Navier--Stokes equations}

\author{I.~S.~Ciuperca\thanks{Universit\'e de Lyon, CNRS,
Institut Camille Jordan UMR 5208, 43 boulevard du 11 novembre 1918, 
F--69622 Villeurbanne Cedex, France 
(\tt  ciuperca@math.univ-lyon1.fr)}
\and E.~Feireisl\thanks{Institute of Mathematics of the Academy of Sciences of the Czech Republic,
\v Zitn\' a 25, CZ--115 67 Praha 1, Czech Republic
(\tt  feireisl@math.cas.cz)}
\and M.~Jai\thanks{Universit\'e de Lyon, CNRS, INSA de Lyon
Institut Camille Jordan UMR 5208, 20 Avenue A. Einstein, F--69621 Villeurbanne, France 
(\tt mohammed.jai@insa-lyon.fr, apetrov@math.univ-lyon1.fr)}
\and A.~Petrov$^{\ddagger}$}

\pagestyle{myheadings}
\thispagestyle{plain}
\markboth{I.~S.~Ciuperca, E.~Feireisl, M.~Jai, A.~Petrov}
{\small{A derivation of the stationary
compressible Reynolds equation via the Navier--Stokes equations}}

\maketitle

\begin{abstract}
We provide a rigorous derivation of the compressible Reynolds system as a singular limit
of the compressible (barotropic) Navier--Stokes system on a thin domain. In particular, the 
existence
of solutions to the Navier--Stokes system with non--homogeneous boundary conditions 
is shown that may be
of independent interest. Our approach is based on new {\it a priori} bounds available for the pressure
law of hard sphere type. Finally, uniqueness for the limit problem is established in the 1D case.
\end{abstract}

\hspace*{-0.6cm}{\textbf {Key words.}}
compressible fluids, stationary Navier--Stokes equations,
thin films, Bogovskii's operator, lubrication, Reynolds equation

\hspace*{-0.6cm}{\textbf {AMS Subject Classification.}} 35Q30, 76A20, 76D05, 76D08, 76N10

\section{Introduction}
\label{Intro}

The asymptotic behavior of fluids in thin domains is formally described by a
system of equations proposed by O.~Reynolds \cite{Rey86TLAB}. If the fluid is compressible,
the resulting system is nonlinear involving the density, the pressure and the vertical
derivative of the horizontal component of the velocity as unknowns.
A rigorous derivation of the Reynolds system in the compressible case is
substantially hampered by a lack of analytical results concerning solvability of the
compressible (stationary) Navier--Stokes system, where the tangential component
of the velocity is prescribed on the boundary of the physical domain.

Let \(\Omega\subset\Er^N\), \(N=2, 3\), be a bounded domain of class $\C^{2 + \nu}$.
We denote by
\(\mathbf{u}\),  \(p\) and \(\vr\)
the velocity field, pressure and density, respectively.
We introduce also the symmetric part of the velocity gradient
\(\mathbf{e}(\mathbf{u})\eqldef
\frac12(\nabla\mathbf{u}+(\nabla\mathbf{u})^{\tra})\),
the density of the fluid \(\vr\geq 0\)
and the viscosity coefficients \(\lambda>0\) and \(\mu>0\).
Here \((\cdot)^{\tra}\) denotes the transpose of a tensor.
Then the compressible (barotropic) Navier--Stokes system 
describing the distribution of the density and the velocity field reads:
\begin{subequations}
\label{eq:pb1}
\begin{align}
\label{eq:pb11}
&\dive(\vr \mathbf{u})=0\quad\text{in}\quad\Omega,\\
\label{eq:pb12}
&\dive(\vr\mathbf{u}\otimes\mathbf{u})+
\nabla p(\vr)
=\dive(\mathbb{S}(\nabla\mathbf{u}))\quad\text{in}
\quad\Omega,
\end{align}
\end{subequations}
where \(\mathbb{S}(\nabla\mathbf{u})\eqldef 2\mu\mathbf{e}
(\mathbf{u})+\lambda\dive(\mathbf{u})\mathbf{I}\)
is the viscous part of the stress tensor with \(\mathbf{I}\) denoting the identity matrix.
The system \eqref{eq:pb1} is endowed with the
following boundary conditions
\begin{equation}
\label{eq:pb14}
\mathbf{u}=\bar{\mathbf{u}}\quad\text{on}\quad\partial\Omega,
\end{equation}
where \(\partial\Omega\) denotes the boundary of \(\Omega\) and
\(\bar{\mathbf{u}}: \Omega\rightarrow \Er^N\) such that
\begin{equation}
\label{eq:pb15}
\dive(\bar{\mathbf{u}})=0\quad\text{and}\quad
\bar{\mathbf{u}}\cdot \mathbf{n}=0\quad\text{on}\quad\partial\Omega.
\end{equation}
Furthermore, in accordance with the latter condition in \eqref{eq:pb15},
we assume that the total mass of the fluid is given, namely
\begin{equation}
\label{totalmass}
\int_{\Omega}\vr \dd x=M>0.
\end{equation}
Solvability of problem \eqref{eq:pb1}--\eqref{totalmass}
is largely open, in particular when $\bar{\mathbf{u}}$
is large compared to the inverse value of the Reynolds and Mach numbers.
Strong (classical) solutions have been constructed by several authors on
condition of smallness of the data, see e.g. Plotnikov, Ruban, and
Sokolowski \cite{PRS08ICNS,PRS09IBTE} or Piasecki and Pokorn\' y \cite{PiP14SSNF} for
slightly different boundary conditions. If the boundary data are large,
the theory of weak solutions must be used. Although the issue of \emph{compactness}
of solutions to problem  \eqref{eq:pb1}
is nowadays relatively well understood,
see the seminal monograph by Lions \cite{LI4} as well as other numerous
recent extensions of the theory listed in Plotnikov and Weigant \cite{PloWei1}, 
the problem of suitable \emph{a priori} bounds in the case on the non--homogeneous boundary
conditions and a proper \emph{construction} of solutions seems largely open.

Chupin and Sard \cite{ChS12CFEJ} applied the framework proposed by Bresch and
Desjardins \cite{DeB12EGVC},
where the viscosity coefficients depend on the density in a specific way.
This approach requires additional friction term in the momentum equation,
\begin{equation}
\label{i6}
\Div(\vr \vu \otimes \vu) + \Grad p(\vr) = \Div \mathbb{S}(\Grad \vu) - r\vr |\vu| \vu,
\end{equation}
as well an extra boundary condition for the density,
\begin{equation}
\label{i7}
\vr = \vr_b
\end{equation}
where \(\vr_b\)
is constant on each connected component of  \(\partial \Omega\).
Although this ansatz provides (formally) very strong estimates, notably on
$\Grad \vr$, the resulting problem is obviously overdetermined and definitely
{\it not} solvable for any boundary data \eqref{eq:pb14}, \eqref{eq:pb15}
and \eqref{i7}. Indeed we may consider
the fluid domain $\Omega$ to be the space between two concentric balls,
\begin{equation*}
\Omega = \bigl{\{} x \in \R^N:\; 0 < R_1 < |x| < R_2 \bigr{\}},
\end{equation*}
with the boundary conditions
\begin{equation}
\label{i8}
\bar{\vu} = 0\quad\text{and}\quad\vr_b =
\begin{cases}
\vr_1 \ \mbox{if}\ |x| = R_1,\\
\vr_2 \ \mbox{if}\ |x| = R_2
\ \mbox{with two \it{different} constants}\ \vr_1 \ne \vr_2.
\end{cases}
\end{equation}
Here \(R_1>0\) and \(R_2>0\) are two positive constants.
Taking the scalar product of \eqref{i6} with $\vu$, integrating the resulting equation
by parts, and using \eqref{eq:pb11}, we check easily that
$\vu \equiv 0$ in $\Omega$. This in turn yields $\Grad p(\vr) = 0$ in $\Omega$,
meaning $\vr$ constant in $\Omega$ in contrast with \eqref{i8}.

The lack of rigorous results in the framework of weak solutions of \eqref{eq:pb1}--\eqref{totalmass}
is mainly caused by very poor {\it a priori} estimates
available for the density. On the other hand, the density of {\it real} fluids admits natural bounds,
\begin{equation}
\label{i9}
0 \leq \vr \leq \bar{\vr},
\end{equation}
where the lower bound is obvious while the upper bound is imposed by the molecular theory -
the volume of a real fluid cannot be made arbitrarily small,
see e.g. \cite{KVB62CPES}. This restriction is reflected by a general {\it equation of state} of a real
fluid in the form
\begin{equation}
\label{i10}
p(\vr) = \vr \Theta Z(\vr),
\end{equation}
where $\Theta > 0$ is the absolute temperature and $Z$ the compressibility factor,
\begin{equation} \label{i11}
Z(\vr) \to +\infty \quad\text{for}\quad\vr \to \bar{\vr} ,
\end{equation}
see e.g. \cite{KLM04AEHS}. The analysis in the present paper is based on the hypotheses
\eqref{i10} and \eqref{i11}.

The paper is organized as follows.
In Section \ref{math_form}, the mathematical formulation of the problem in terms of
the velocity field and the density is given. Then a family of approximate problems is introduced
and a fixed point procedure applied to show an existence result for these problems.
Next, under appropriate assumptions on the data, we show that the problem
possesses a weak solution which is obtained as a limit of a sequence of solutions of
the approximate problems. In Section \ref{J}, further
properties of the solution are exhibited by using more specific techniques like
the Bogovskii's and extension operators, and an anisotropic version of 
the Sobolev interpolation inequality
(see Section \ref{a}). Then the justification of the Reynods system is derived.
Finally, the uniqueness result for one--dimensional problem follows
from the strong monotonicity of the pressure.

\section{Existence of weak solutions}
\label{math_form}

The weak formulation associated to \eqref{eq:pb1}--\eqref{eq:pb14} is obtained by multiplying
\eqref{eq:pb11} by \(\varphi\in \C^1(\bar{\Omega};\Er)\) and \eqref{eq:pb12} by
\({\mathbf{\psi}}\in \W^{1,2}_0(\Omega;\Er^N)\), respectively. Integrating
formally these identities over \(\Omega\), we end up with the following problem:
\begin{subequations}
\label{weak_form}
%\begin{empheq}[left=\iteqlbrace \,]{align}
\begin{align}
\notag
&
\text{Find }\mathbf{u}\in\W^{1,2}(\Omega;\Er^N)\text{ and }\vr\in\L^{\infty}(\Omega;\Er^+)
\text{, with }\vr\in[0,\bar{\vr}), \text{satisfying}\\\notag&
\mathbf{u}=\bar{\mathbf{u}}\text{ on }\partial\Omega, \
\int_{\Omega}\vr \,\dd x=M \text{, and}\  p(\vr)\in\L^2(\Omega;\Er) \ \text{such that}  \\&
\label{weak_form1}
\forall\varphi\in\C^1(\bar{\Omega};\Er):
\int_{\Omega}\vr \mathbf{u}\cdot \nabla\varphi \,\dd x=0,\\&
\label{weak_form2}
\forall\psi\in \W^{1,2}_0(\Omega;\Er^N){:}
\int_{\Omega}\bigl((\vr\mathbf{u}{\otimes}\mathbf{u}){:}\nabla\psi
+p(\vr)\dive\psi\bigr)\,\dd x
{=}
\int_{\Omega}\mathbb{S}(\nabla\mathbf{u}){:}\nabla\psi\,\dd x.
%\end{empheq}
\end{align}
\end{subequations}
When there is no confusion, we will use simply the notation \(\X(\Omega)\)
instead of  \(\X(\Omega;\Y)\) where \(\X\) is a functional space and \(\Y\) a
vectorial space. Furthermore, we denote 
\begin{equation*}
\vr_M\eqldef \frac{M}{|\Omega|}. 
\end{equation*}
As $\vr < \overline{\vr}$
a compatibility condition \(\vr_M<\bar{\vr}\) must be imposed.
The functional spaces used throughout the present
paper are commonly used in the literature; the reader may consult e.g.
\cite{Adams75SS,Brez83AFTA}.

We assume that the pressure \(p(\zeta):[0,\bar\vr)\rightarrow \Er\)
is continuously on \([0,\bar\vr)\), differentiable on \((0,\bar\vr)\) and satisfies
assumptions:
\begin{subequations}
\label{pression}
\begin{align}
\label{pression1}
&p(0)\geq 0 \quad\text{and}\quad p'(\zeta)\geq 0
\quad\text{for all}\quad\zeta\in(0,\bar{\vr}),\\
\label{pression2}
&\lim_{\zeta\rightarrow \bar{\vr}}(p(\zeta))=+\infty.
\end{align}
\end{subequations}

As a matter of fact, the construction we propose below requires the estimates available
for elliptic operators, notably Laplacian, on bounded domain.
Accordingly, we impose rather strong regularity properties on
$\partial \Omega$.
Our goal is to show the following existence result:
\begin{theorem}
\label{ET1}
Let $\Omega \subset \Er^N$, $N=2,3$ of class $C^{2 + \nu}$ be a bounded domain.
Let the pressure be given as $p = p(\vr)$ satisfying \eqref{pression}.
Let $M > 0$ be given such that
\begin{equation*}
\vr_M < \Ov{\vr}.
\end{equation*}
Let the boundary datum $\bar{\mathbf{u}}$ be given by a function
\begin{equation*}
\bar{\mathbf{u}} \in \W^{1,q}(\Omega)\quad\text{such that}\quad
q > N\quad\text{and}\quad\bar{\mathbf{u}}\cdot\mathbf{n} = 0
\quad\text{on}\quad\partial \Omega.
\end{equation*}
Then problem \eqref{eq:pb1}--\eqref{totalmass} admits a weak solution in the class:
\begin{equation*}
\vr \in \L^\infty(\Omega)\quad\text{with}\quad
\vr\in[0,\bar{\vr})\quad \text{ a.e. in }\quad
\Omega,\quad\mathbf{u}\in \W^{1,2}(\Omega),\quad
p(\vr) \in \L^2(\Omega).
\end{equation*}
\end{theorem}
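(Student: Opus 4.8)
The plan is to realize the weak solution as a limit of solutions to a multi-level approximate problem, following the classical architecture for stationary compressible Navier--Stokes systems, but replacing the usual $\vr^\gamma$--pressure estimates by bounds tailored to the hard--sphere hypothesis \eqref{pression2}. First I would homogenize the boundary condition, writing $\mathbf u=\mathbf v+\bar{\mathbf u}$ with $\mathbf v\in\W^{1,2}_0(\Omega)$, and record that since $q>N$ the datum $\bar{\mathbf u}$ belongs to $\C(\overline\Omega)$ by the Morrey embedding, hence $\bar{\mathbf u}\in\L^\infty(\Omega)$, a fact that will be used repeatedly to estimate the convective term.

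Next I would set up a family of approximate problems depending on a small parameter $\varepsilon>0$: an elliptic regularization of the continuity equation (for instance $\varepsilon(\vr-\vr_M)+\dive(\vr\mathbf u)-\varepsilon\Delta\vr=0$ with homogeneous Neumann data, which by the maximum principle keeps $0<\vr\le\bar\vr$ and preserves $\int_\Omega\vr=M$) together with a truncated pressure $p_\varepsilon$ that agrees with $p$ on $[0,\bar\vr-\varepsilon]$ and is extended to a nondecreasing $\C^1$ function on $[0,+\infty)$; an artificial pressure term $\varepsilon\vr^\beta$ may be added if convenient for the construction. For fixed $\varepsilon$ I would solve the approximate system by a fixed--point argument: given a velocity $\mathbf w$ in a ball of $\W^{1,2}(\Omega)$, the regularized continuity equation has a unique solution $\vr=\vr[\mathbf w]$ with the above bounds; inserting it into the (suitably linearized) momentum equation and invoking elliptic theory together with the $\C^{2+\nu}$ regularity of $\partial\Omega$ produces a unique velocity $\mathbf u=\mathcal F(\mathbf w)$; the map $\mathcal F$ is continuous and compact by the compact Sobolev embeddings, and the a priori bound described below guarantees that it maps a sufficiently large ball into itself, so the Leray--Schauder fixed point theorem yields an approximate solution $(\vr_\varepsilon,\mathbf u_\varepsilon)$.

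The a priori estimates, uniform in $\varepsilon$, are the core of the argument. Testing the momentum equation with $\mathbf u_\varepsilon-\bar{\mathbf u}$, the key structural cancellation is that the pressure contributes only a boundary term: writing $P$ for the pressure potential ($\vr P'(\vr)-P(\vr)=p(\vr)$) and renormalizing the continuity equation one gets $\int_\Omega p(\vr_\varepsilon)\dive\mathbf u_\varepsilon\,\mathrm{d}x=-\int_{\partial\Omega}P(\vr_\varepsilon)\,\bar{\mathbf u}\cdot\mathbf n\,\mathrm{dS}+(\text{favourable }\varepsilon\text{--dissipation})=(\text{favourable }\varepsilon\text{--dissipation})$ because $\bar{\mathbf u}\cdot\mathbf n=0$. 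Combining this with Korn's inequality, the uniform bound $0\le\vr_\varepsilon\le\bar\vr$, and $\bar{\mathbf u}\in\L^\infty(\Omega)$, one closes the energy identity into a bound for $\|\nabla\mathbf u_\varepsilon\|_{\L^2}$. An $\L^2$ bound for the pressure then follows by testing the momentum equation with the Bogovskii operator $\mathcal B\bigl[p_\varepsilon(\vr_\varepsilon)-\tfrac1{|\Omega|}\int_\Omega p_\varepsilon(\vr_\varepsilon)\,\mathrm{d}x\bigr]$: the resulting estimate $\|p_\varepsilon(\vr_\varepsilon)\|_{\L^2}^2\lesssim \|p_\varepsilon(\vr_\varepsilon)\|_{\L^2}+C$ (with $C$ depending only on the data and on $\|\nabla\mathbf u_\varepsilon\|_{\L^2}$) closes thanks to its quadratic left--hand side, giving $\|p_\varepsilon(\vr_\varepsilon)\|_{\L^2}\le C$ uniformly. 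Here the hard--sphere hypothesis \eqref{pression2} is decisive: an $\L^1$ (a fortiori $\L^2$) bound on $p(\vr)$ forces $\vr<\bar\vr$ almost everywhere, since $p$ is continuous on $[0,\bar\vr)$ and blows up at $\bar\vr$.

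Finally I would pass to the limit $\varepsilon\to0$ (and $\beta\to\infty$ if an artificial pressure was used). The uniform bounds give $\mathbf u_\varepsilon\rightharpoonup\mathbf u$ in $\W^{1,2}$, hence $\mathbf u_\varepsilon\to\mathbf u$ strongly in $\L^r$ for $r<2^*$, together with $\vr_\varepsilon\rightharpoonup\vr$ weak--$*$ in $\L^\infty$ and $p_\varepsilon(\vr_\varepsilon)\rightharpoonup\overline{p(\vr)}$ in $\L^2$; the linear terms, the convective term $\vr_\varepsilon\mathbf u_\varepsilon\otimes\mathbf u_\varepsilon$, and the mass constraint pass to the limit, while the regularizing terms vanish. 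The remaining --- and principal --- difficulty is to identify $\overline{p(\vr)}=p(\vr)$, i.e.\ to upgrade $\vr_\varepsilon\to\vr$ to almost everywhere convergence. I would do this with the effective viscous flux identity following Lions--Feireisl (testing the momentum equation with the field whose divergence is $\vr_\varepsilon-\langle\vr_\varepsilon\rangle$, namely $\nabla\Delta^{-1}(\vr_\varepsilon-\langle\vr_\varepsilon\rangle)$), combined with the monotonicity of $p$ and the renormalized continuity equation for the limit, which together force $\overline{p(\vr)\,\vr}=\overline{p(\vr)}\,\vr$ and hence $\vr_\varepsilon\to\vr$ a.e.; the $\L^2$ bound on $p(\vr)$ then yields $\vr<\bar\vr$ a.e., and the triple $(\vr,\mathbf u,p(\vr))$ has exactly the regularity claimed. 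I expect the genuine obstacles to be (i) closing the energy estimate, since \emph{no} smallness of $\bar{\mathbf u}$ is assumed and the boundedness of $\vr$ together with $\bar{\mathbf u}\in\L^\infty$ must be exploited very carefully in the interpolation of the convective term, and (ii) making the Bogovskii and effective--flux estimates work uniformly in the presence of the \emph{singular} pressure rather than the polynomial one.
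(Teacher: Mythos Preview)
Your overall architecture (multi--level approximation, fixed point, energy bound, Bogovskii pressure estimate, effective viscous flux) matches the paper's, but three concrete steps do not work as you describe them.

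\textbf{1. The upper bound $\vr\le\bar\vr$ is not a maximum principle.} For the regularized equation $\varepsilon(\vr-\vr_M)+\dive(\vr\mathbf u)-\varepsilon\Delta\vr=0$ there is no barrier at $\bar\vr$; nothing prevents $\vr$ from exceeding $\bar\vr$ at the approximate level. The paper instead inserts a cut--off $T(\vr)=\min(\vr,\bar\vr)^+$ into the continuity equation \emph{and} truncates the pressure to $p_R$ (equal to $p$ on $[0,\bar\vr-\tfrac1R]$, affine beyond). The bound $\vr<\bar\vr$ is then obtained \emph{a posteriori}, after the limit $R\to\infty$, from the energy estimate $\int_{\{\vr\ge\vr_M\}}\vr\,G'_{R,\delta}(\vr)\,\mathrm{d}x\le C$ and the blow--up $G'_\delta(Z)\to\infty$ as $Z\to\bar\vr$ (this uses \eqref{pression2}). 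Without this mechanism your approximate problem may produce densities with $\vr>\bar\vr$ on a set of positive measure, and then neither $p(\vr)$ nor your energy argument makes sense.

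\textbf{2. The energy estimate does not close from $\bar{\mathbf u}\in\L^\infty$ alone.} The dangerous convective term is $\int_\Omega T(\vr)(\mathbf u\cdot\nabla\mathbf u)\cdot\bar{\mathbf u}\,\mathrm{d}x$, which is bounded by $\bar\vr\,\|\bar{\mathbf u}\|_{\L^4}\,\|\mathbf u\|_{\W^{1,2}}^2$. No smallness of the data is assumed, so this quadratic term cannot be absorbed unless $\|\bar{\mathbf u}\|_{\L^4}$ is small. The paper resolves this with an \emph{extension lemma} (Lemma~\ref{aL1}): given $\bar{\mathbf u}\in\W^{1,q}$, $q>N$, one can replace $\bar{\mathbf u}$ by a divergence--free extension with the same trace and arbitrarily small $\L^4$ norm (constructed via a thin boundary layer corrected by the Bogovskii operator). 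This is the device that makes the large--data case tractable; your ``careful interpolation'' with $\bar{\mathbf u}\in\L^\infty$ does not provide it.

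\textbf{3. The Bogovskii pressure estimate does not close as a single step.} Testing with $\mathcal B[p(\vr)-\langle p(\vr)\rangle]$ yields $\|p\|_{\L^2}^2\le |\Omega|^{-1}\bigl(\int p\bigr)^2+C\|p\|_{\L^2}$, which is vacuous by Cauchy--Schwarz. The paper proceeds in two steps: first test with $\mathcal B[\vr-\vr_M]$ to get $\int p(\vr)\vr\le\vr_M\int p(\vr)+C$, and then exploit the strict inequality $\vr_M<\bar\vr$ by choosing $\lambda>1$ with $\lambda\vr_M<\bar\vr$ and splitting $\int p\,\vr_M\le Mp(\lambda\vr_M)+\lambda^{-1}\int p\,\vr$, which gives $\int p(\vr)\vr\le C$ and hence $p(\vr)\in\L^1$; only then does a second Bogovskii test (with $\mathcal B[p^\alpha(\vr)]$, $\alpha$ small) upgrade to $p(\vr)\in\L^{1+\alpha}$. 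The hypothesis $\vr_M<\bar\vr$ is used precisely here and nowhere in your outline.

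The remainder of your plan (limit passage, Lions' effective viscous flux identity, monotonicity of $p$, renormalized continuity) is the same as the paper's and is fine.
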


\subsection{Approximate problems}
\label{E}

We consider for any \(d\in\En^*\) large enough the space
\(X^d\eqldef\text{span}\{\mathbf{w}^1,\ldots, \mathbf{w}^d\}\subset\W_0^{1,2}(\Omega)\)
with \(\{\mathbf{w}^i\}_{i=1}^{\infty}\) a complete orthogonal system in \(\L^2
(\Omega)\) 
such that \(\mathbf{w}^i\) belongs to \(\C^{2,\eta}(\Omega)\) for all \(i\in\En\).
For any \(R>\frac1{\bar\vr}\) 
large enough, let \(T:\Er\rightarrow \Er\) and \(p_R: [0,+\infty[\rightarrow \Er\)
be the cut--off function and a truncation of the pressure function,
respectively, defined as follows:
\begin{equation*}
T(\vr)\eqldef
\begin{cases}
0\quad\text{if}\quad\vr\leq 0,\\
\vr\quad\text{if}\quad\vr\in[0,\bar\vr],\\
\bar{\vr}\quad\text{if}\quad\vr\geq \bar\vr,
\end{cases}
\end{equation*}
and
\begin{equation*}
p_R(\vr)\eqldef
\begin{cases}
p(\vr)&\text{if}\quad\vr\in[0,\bar\vr-\frac1R],\\
p'(\bar{\vr}-\frac1R)(\vr-(\bar\vr-\frac1R))+p(\bar\vr-\frac1R)
&\text{if}\quad\vr>\bar{\vr}-\frac1R.
\end{cases}
\end{equation*}
Let us introduce a small parameter \(\delta>0\). Then we consider the following family of
approximate problems
\begin{subequations}
\label{weak_penalty}
%\begin{empheq}[left=\iteqlbrace \,]{align}
\begin{align}
\notag
&\text{Find } \vc{u} = \mathbf{v} + \bar{\vu},\ \vc{v} \in X^d\text{ and }
\vr\in\W^{2,2}(\Omega)\text{ such that }\\&
\label{weak_penalty1}
\int_{\Omega}\mathbb{S}(\nabla\mathbf{u}){\cdot}\nabla\mathbf{w}^i\,\dd x=
\int_{\Omega}
T(\vr){\mathbf{u}}{\otimes}{\mathbf{u}}){:}\nabla\mathbf{w}^i\,\dd x
+\int_{\Omega}(p_R(\vr){+}\sqrt{\delta}\vr)\dive\mathbf{w}^i\,\dd x\\\notag&
-\delta\int_{\Omega}\nabla(\vr{\mathbf{u}})\cdot\nabla\mathbf{w}^i\,\dd x%\\\notag&
-\int_{\Omega}\delta\vr{\mathbf{u}}\mathbf{w}^i\,\dd x
\quad\text{for all}\quad i=1,\ldots, d,\\&
\label{weak_penalty2}
\delta\vr-\delta\Delta
\vr+\dive(T(\vr)\mathbf{u})=\delta\vr_M
\quad\text{in}\quad\Omega,\\&
\label{weak_bound_cond}
\nabla\vr\cdot\mathbf{n}=0\quad\text{on}\quad\partial\Omega,
\end{align}
\end{subequations}
Obviously,
\begin{equation}\label{totalmassweak}
\vc{u} = \Ov{\vc{u}} \ \mbox{on}\ \partial \Omega,\ \mbox{and}\ \int_{\Omega} \vr \ \dx = M.
\end{equation}

Notice that solutions to
problem \eqref{weak_penalty} depend on \(d\), \(R\) and \(\delta\) but, for simplicity,
we denote these solutions by $\vc{u}$ and \(\vr\).

In the sequel, we denote
by the symbol $\rightharpoonup$ the weak convergence and by $\rightarrow$
the strong convergence and by \(\hookrightarrow\)
and  \(\hookrightarrow \hookrightarrow\) the continuous and compact embeddings, respectively.

\subsection{Preliminary results}
\label{Pre_results}

This section is devoted to some preliminary results that will play a crucial role
in the present work.
First, we shall need the following result:
\begin{lemma}
\label{lemma1}
Assume that \(\mathbf{u}\in\W^{1,2}(\Omega)\) such that
\(\mathbf{u}\cdot \mathbf{n}=0\) on
\(\partial\Omega\) holds.
Then there exists a unique solution \(\vr\in\W^{2,2}(\Omega)\) to problem
\eqref{weak_penalty2}--\eqref{totalmassweak}
such that \(\vr\geq 0\). If, in addition, \(\dive\mathbf{u}\in\L^{\infty}(\Omega)\), then there
exists \(\underline{\vr}>0\) such that \(\vr\geq\underline{\vr}\).
Furthermore, the following stability result holds; if
\begin{equation*}
\mathbf{u}_n\rightarrow \mathbf{u}\quad\text{in}\quad\W^{1,2}(\Omega),
\end{equation*}
and \(\vr_n\) is the solution to \eqref{weak_penalty2} corresponding to \(\mathbf{u}_n\),
then we have
\begin{equation*}
\vr_n\rightharpoonup
\vr\quad\text{in}\quad\W^{2,2}(\Omega)\quad\text{weak},
\end{equation*}
where \(\vr\) is the solution corresponding to \(\mathbf{u}\).
\end{lemma}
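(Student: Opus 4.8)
The plan is to work with the weak form of \eqref{weak_penalty2}--\eqref{weak_bound_cond}: for every $\phi\in\W^{1,2}(\Omega)$,
\[
\delta\int_\Omega \vr\,\phi \,\dd x + \delta\int_\Omega \nabla\vr\cdot\nabla\phi \,\dd x - \int_\Omega T(\vr)\,\mathbf{u}\cdot\nabla\phi \,\dd x = \delta\vr_M\int_\Omega \phi\,\dd x ,
\]
the boundary terms vanishing because $\nabla\vr\cdot\mathbf{n}=0$ and $\mathbf{u}\cdot\mathbf{n}=0$; choosing $\phi\equiv 1$ and using $\int_\Omega\dive\mathbf{u}\,\dd x=0$ shows that the mass constraint $\int_\Omega\vr\,\dd x=M$ comes for free, so only \eqref{weak_penalty2}--\eqref{weak_bound_cond} has to be solved. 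For existence I would use a Schauder fixed point. Given $\vr^{*}\in\W^{1,2}(\Omega)$, solve the \emph{linear} problem $\delta\vr-\delta\Delta\vr=\delta\vr_M+\dive(T(\vr^{*})\mathbf{u})$, $\nabla\vr\cdot\mathbf{n}=0$, by Lax--Milgram and call the solution $\mathcal{S}(\vr^{*})$. Since $0\le T\le\bar\vr$, the source $-\int_\Omega T(\vr^{*})\mathbf{u}\cdot\nabla\phi$ is a functional on $\W^{1,2}(\Omega)$ of norm $\le C(\delta)(1+\|\mathbf{u}\|_{\L^2})$ \emph{independently of} $\vr^{*}$, so $\mathcal{S}$ maps a fixed ball of $\W^{1,2}(\Omega)$ into itself; and $\mathcal{S}$ is compact, because $\vr^{*}_n\rightharpoonup\vr^{*}$ in $\W^{1,2}$ gives (Rellich) $\vr^{*}_n\to\vr^{*}$ in $\L^2$, hence $T(\vr^{*}_n)\mathbf{u}\to T(\vr^{*})\mathbf{u}$ in $\L^2$ by dominated convergence (again using $|T|\le\bar\vr$), and the energy estimate for the linear equation turns this into strong $\W^{1,2}$ convergence of the images. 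Schauder then produces a fixed point $\vr\in\W^{1,2}(\Omega)$, i.e.\ a weak solution, and the standard elliptic bootstrap for the Neumann Laplacian finishes it: $T(\vr)\mathbf{u}\in\L^6$ gives $\vr\in\W^{1,6}$, whence $\dive(T(\vr)\mathbf{u})=T'(\vr)\,\nabla\vr\cdot\mathbf{u}+T(\vr)\dive\mathbf{u}\in\L^2$ and $\vr\in\W^{2,2}(\Omega)$.

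Non-negativity would follow by testing the weak identity with $\phi=\vr^{-}:=\max\{-\vr,0\}$: on $\{\vr<0\}$ one has $T(\vr)=0$, so the convective term drops out and only $-\delta\|\vr^{-}\|_{\L^2}^2-\delta\|\nabla\vr^{-}\|_{\L^2}^2=\delta\vr_M\int_\Omega\vr^{-}\,\dd x\ge 0$ survives, forcing $\vr^{-}=0$. For uniqueness, the difference $w=\vr_1-\vr_2$ of two solutions solves $\delta w-\delta\Delta w+\dive((T(\vr_1)-T(\vr_2))\mathbf{u})=0$ with $\nabla w\cdot\mathbf{n}=0$; I would run an $\L^1$-contraction argument, testing with a regularised sign $\mathrm{sgn}_\eta(w)$ and letting $\eta\to 0$: the diffusion term is non-negative, the zeroth-order term tends to $\delta\|w\|_{\L^1}$, and the convective term, bounded by $\int_{\{|w|\le\eta\}}|\mathbf{u}|\,|\nabla w|\,\dd x$ (using $|T(\vr_1)-T(\vr_2)|\le|w|$ and $|w|\,\mathrm{sgn}_\eta'(w)\le 1$), vanishes as $\eta\to0$ because $\nabla w=0$ a.e.\ on $\{w=0\}$; hence $w=0$. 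Note this needs only $\mathbf{u}\in\L^2$, no smallness.

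For the lower bound, assume in addition $\dive\mathbf{u}\in\L^\infty(\Omega)$, set $D=\|\dive\mathbf{u}\|_{\L^\infty}$, and fix $k$ with $0<k<\min\{\bar\vr,\,\delta\vr_M/(\delta+D)\}$. Testing with $\phi=(k-\vr)^{+}:=\max\{k-\vr,0\}$, whose support lies in $\{\vr<k\}\subset\{\vr<\bar\vr\}$ where $T(\vr)=\vr$, and rewriting the convective term (using $\int_\Omega\dive\mathbf{u}\,\dd x=0$) as $\tfrac12\int_\Omega\dive\mathbf{u}\,\big(k^2-(\min\{\vr,k\})^2\big)\,\dd x$, together with the elementary bound $0\le k^2-(\min\{\vr,k\})^2\le 2k\,(k-\vr)^{+}$, should give
\[
\delta\,\big\|\nabla(k-\vr)^{+}\big\|_{\L^2}^2 \;\le\; \big((\delta+D)k-\delta\vr_M\big)\int_\Omega(k-\vr)^{+}\,\dd x \;\le\; 0 ,
\]
so $(k-\vr)^{+}=0$ a.e.\ and $\underline\vr:=k$ works; the assumption $\dive\mathbf{u}\in\L^\infty$ is exactly what makes the convective term absorbable.

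For the stability claim, if $\mathbf{u}_n\to\mathbf{u}$ in $\W^{1,2}$ then the fixed-point estimate (equivalently, testing the equation for $\vr_n$ with $\vr_n$) bounds $\vr_n$ in $\W^{1,2}(\Omega)$ uniformly in $n$ because $\{\mathbf{u}_n\}$ is bounded in $\W^{1,2}$, and the elliptic bootstrap makes the bound uniform in $\W^{2,2}(\Omega)$; extracting $\vr_n\rightharpoonup\tilde\vr$ in $\W^{2,2}$ — hence $\vr_n\to\tilde\vr$ in $\W^{1,2}$ and a.e., and $T(\vr_n)\mathbf{u}_n\to T(\tilde\vr)\mathbf{u}$ in $\L^2$ — and passing to the limit in the weak identity identifies $\tilde\vr$ as a solution for $\mathbf{u}$, so $\tilde\vr=\vr$ by the uniqueness above; since the limit is subsequence-independent, the whole sequence converges. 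I expect the delicate point to be the compactness of $\mathcal{S}$: the Neumann solution operator is \emph{not} compact, so the compactness must be harvested entirely from the Nemytskii map $\vr^{*}\mapsto T(\vr^{*})\mathbf{u}$, which is compact from $\W^{1,2}$ into $\L^2$ precisely because the cut-off $T$ is bounded — this is where the truncation in \eqref{weak_penalty2} pays off — with a secondary subtlety in the choice of test function $(k-\vr)^{+}$ for the lower bound.
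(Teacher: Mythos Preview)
Your proposal is correct and follows essentially the same strategy as the paper: Schauder fixed point for existence (with Lax--Milgram on the frozen linear problem), an $\L^1$-type contraction for uniqueness, and elliptic regularity for the $\W^{2,2}$ bound and stability. The tactical choices differ in a few places. For compactness of $\mathcal S$ you squeeze it out of the Nemytskii map $\vr^*\mapsto T(\vr^*)\vu$ (Rellich plus boundedness of $T$) and the linear energy estimate, whereas the paper instead observes $\dive(T(\tilde\vr)\vu)\in\L^{3/2}$ uniformly and invokes $\W^{2,3/2}\hookrightarrow\hookrightarrow\W^{1,2}$; both are fine. For $\vr\ge0$ and $\vr\ge\underline\vr$, the paper proves a single \emph{comparison principle} (testing the difference with $\varphi(\vr^-)=\vr^-/(\vr^-+\eta)$ and sending $\eta\to0$) and then compares with the constant subsolution $\underline\vr$ satisfying $\underline\vr(\delta+\dive\vu)\le\delta\vr_M$; you instead do two direct test-function arguments ($\vr^-$ exploiting $T=0$ on $\{\vr<0\}$, and $(k-\vr)^+$), which is slightly more hands-on but arrives at the same constant $k<\delta\vr_M/(\delta+\|\dive\vu\|_{\L^\infty})$. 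For the stability $\W^{2,2}$ bound, the paper closes a self-referential estimate via the interpolation $\|\vr_n\|_{\W^{1,4}}\le C\|\vr_n\|_{\L^1}^\alpha\|\vr_n\|_{\W^{2,2}}^{1-\alpha}$ together with $\|\vr_n\|_{\L^1}=M$, while your sketch relies on the two-step bootstrap $T(\vr_n)\vu_n\in\L^6\Rightarrow\vr_n\in\W^{1,6}\Rightarrow\dive(T(\vr_n)\vu_n)\in\L^2$; this is valid, just make sure you state the intermediate $\W^{1,6}$ step explicitly when you write it up, since the jump from $\W^{1,2}$ directly to a uniform $\W^{2,2}$ bound does not close without it.
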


\begin{proof}
The proof of existence and uniqueness results can be obtained by using an approach
analogous to \cite{NoS04MTCF}.
However for reader's convenience, a detailed proof is given in a more general
setting in the Appendix. Now taking \(\underline{\vr}>0\) such that
\(\underline{\vr}(\delta-\dive\vu)\leq \delta\vr_{M}\), we get by the comparison principle
that \(\vr\geq\underline{\vr}\).
On the other hand, assume that \(\mathbf{u}_n\)
converges strongly to \(\vu\) in \(\W^{1,2}(\Omega)\). It follows from
\eqref{totalmassweak} that
\begin{equation*}
\norm[\L^1(\Omega)]{\vr_n} = M.
\end{equation*}
Furthermore, by means of the standard elliptic estimates, there exists a constant \(C_1 >0\), 
$C_1$ depending on $\delta$, such that
\begin{equation}
\label{eq:exist10}
\norm[\W^{2,2}(\Omega)]{\vr_n}\leq C_1
\bigl(
1+\norm[\L^2(\Omega)]{\dive(T(\vr_n)\mathbf{u}_n)}
\bigr).
\end{equation}
Using the following identity
\begin{equation*}
\dive(T(\vr_n)\mathbf{u}_n)=
T(\vr_n)\dive\mathbf{u}_n+T'(\vr_n)\nabla\vr_n \cdot \mathbf{u}_n,
\end{equation*}
as well as \eqref{eq:exist10}, we get
\begin{equation}
\label{eq:exists11}
\norm[\W^{2,2}(\Omega)]{\vr_n}\leq C_1
\bigl(
1+\bar\vr\norm[\W^{1,2}(\Omega)]{\mathbf{u}_n}+
\norm[\W^{1,4}(\Omega)]{\vr_n}\norm[\W^{1,2}(\Omega)]{\mathbf{u}_n}
\bigr).
\end{equation}
According to the interpolation theory, we infer that for \(\alpha\in]0,1[\), there exists a
constant \(C_{\alpha}>0\) depending on \(\alpha\) such that
\begin{equation}
\label{eq:exists12}
\norm[\W^{1,4}(\Omega)]{\vr_n}\leq C_{\alpha}
\norm[\L^1(\Omega)]{\vr_n}^{\alpha}\norm[\W^{2,2}(\Omega)]{\vr_n}^{1-\alpha}.
\end{equation}
Clearly, introducing \eqref{eq:exists12} into \eqref{eq:exists11} we get that \(\vr_n\)
is bounded in \(\W^{2,2}(\Omega)\), which proves the Lemma.\\
\end{proof}
Let us recall the following version of Sch{\ae}ffer's fixed point:
\begin{lemma}
\label{fix_point}
Let \(X\) be a Banach space and
\(\mathcal{A}: X\rightarrow X\) be a continuous and compact mapping. Assume that for any
\(\theta\in[0,1]\) any fixed point \(\mathbf{u}_{\theta}\in X\) of
\(\theta\mathcal{A}(\mathbf{u}_{\theta})=\mathbf{u}_{\theta}\) is bounded in \(X\)
uniformly with respect to \(\theta\).
Then \(\mathcal{A}\) possesses at least one fixed point in \(X\).
\end{lemma}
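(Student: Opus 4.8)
The plan is to deduce the statement from the classical Schauder fixed point theorem by means of a radial retraction onto a sufficiently large ball. Let $M>0$ be a constant such that every solution $\mathbf{u}_\theta\in X$ of $\theta\mathcal{A}(\mathbf{u}_\theta)=\mathbf{u}_\theta$, $\theta\in[0,1]$, satisfies $\|\mathbf{u}_\theta\|_X\le M$, and fix an arbitrary $R>M$. Introduce the closed ball $\overline{B}_R\eqldef\{\mathbf{v}\in X:\|\mathbf{v}\|_X\le R\}$ together with the radial retraction $\rho:X\to\overline{B}_R$ defined by $\rho(\mathbf{v})=\mathbf{v}$ when $\|\mathbf{v}\|_X\le R$ and $\rho(\mathbf{v})=R\mathbf{v}/\|\mathbf{v}\|_X$ when $\|\mathbf{v}\|_X>R$. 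This map is continuous on $X$ and its range is contained in $\overline{B}_R$.

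Next I would consider the composed map $\mathcal{B}\eqldef\rho\circ\mathcal{A}:\overline{B}_R\to\overline{B}_R$. Since $\mathcal{A}$ is continuous and maps the bounded set $\overline{B}_R$ to a relatively compact subset of $X$, and since the continuous image of a relatively compact set is relatively compact, the map $\mathcal{B}$ is continuous and $\mathcal{B}(\overline{B}_R)$ is relatively compact. As $\overline{B}_R$ is a nonempty closed bounded convex subset of the Banach space $X$, Schauder's fixed point theorem yields a point $\mathbf{u}\in\overline{B}_R$ with $\rho(\mathcal{A}(\mathbf{u}))=\mathbf{u}$.

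It then remains to check that $\mathbf{u}$ is in fact a fixed point of $\mathcal{A}$ itself, which amounts to showing $\|\mathcal{A}(\mathbf{u})\|_X\le R$, so that $\rho$ acts as the identity at $\mathcal{A}(\mathbf{u})$. Arguing by contradiction, suppose $\|\mathcal{A}(\mathbf{u})\|_X>R$. Then $\mathbf{u}=\rho(\mathcal{A}(\mathbf{u}))=\theta\mathcal{A}(\mathbf{u})$ with $\theta\eqldef R/\|\mathcal{A}(\mathbf{u})\|_X\in(0,1)$, so $\mathbf{u}$ is a fixed point of $\theta\mathcal{A}$ and the uniform bound forces $\|\mathbf{u}\|_X\le M$; on the other hand $\|\mathbf{u}\|_X=\|\rho(\mathcal{A}(\mathbf{u}))\|_X=R>M$, a contradiction. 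Hence $\mathcal{A}(\mathbf{u})=\rho(\mathcal{A}(\mathbf{u}))=\mathbf{u}$, proving the lemma. I do not anticipate any genuine obstacle here: this is the standard Leray--Schauder alternative, and the only points requiring a little care are the stability of continuity and compactness under composition with $\rho$ and the bookkeeping in the contradiction step (the case $\theta=0$ of the hypothesis being used only trivially, since $0\cdot\mathcal{A}(\mathbf{u})=\mathbf{u}$ gives $\mathbf{u}=0$).
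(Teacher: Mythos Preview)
Your proof is correct and is the standard Leray--Schauder/Schaefer argument via radial retraction and Schauder's theorem. The paper does not actually supply a proof of this lemma; it simply refers the reader to \cite[Theorem 9.2.4]{Evans90PDE}, whose proof proceeds along exactly the same lines as yours.
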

\noindent For a detailed proof of the above lemma,
the reader is referred to \cite[Theorem 9.2.4]{Evans90PDE} .

Notice that Poincar\'e and Korn inequalities, enable us
to deduce that there exists a constant \(C_P>0\) such that
\begin{equation}
\label{E12}
\forall\mathbf{v}\in\W^{1,2}_0(\Omega):\,
\|\mathbf{v}\|^2_{\W^{1,2}(\Omega)} \leq C_P
\int_{\Omega}
\mathbb{S}(\Grad \mathbf{v}): \Grad\mathbf{v}\,\dd x.
\end{equation}

\subsection{Existence result associated to approximate problem and uniform bounds}
\label{Exist_unif_bound}

In this section, the existence result to Problem \eqref{weak_penalty}
is proved by using fixed--point precedure and some uniform bounds are
highlighted that will play a crucial role in the next sections.  To this aim,
we define the mapping \(\mathcal{A}\) by
\begin{equation*}
\begin{aligned}
\mathcal{A}: X^d&\rightarrow X^d\\
\tilde{\mathbf{v}}&\mapsto \mathbf{v}=\mathcal{A}(\tilde{\mathbf{v}})
\end{aligned}
\end{equation*}
$\vc{v} = \vu - \Ov{\vu}$, where $\vu$ is a solution of the problem:
\begin{subequations}
\label{eq:exists1}
\begin{align}
\label{eq:exists1_1}
&\int_{\Omega}\mathbb{S}(\nabla\mathbf{u}){\cdot}\nabla\mathbf{w}^i\,\dd x=
\int_{\Omega}
T(\vr)\tilde{\mathbf{u}}{\otimes}\tilde{\mathbf{u}}){:}\nabla\mathbf{w}^i\,\dd x
+\int_{\Omega} (p_R(\vr){+}\sqrt{\delta}\vr)\dive\mathbf{w}^i\,\dd x\\&\notag
-\delta\int_{\Omega}\nabla(\vr\tilde{\mathbf{u}})\cdot\nabla\mathbf{w}^i\,\dd x
-\int_{\Omega}\delta\vr\tilde{\mathbf{u}}\mathbf{w}^i\,\dd x
\quad\text{for all}\quad i=1,\ldots, d,\\&
\label{eq:exists1_2}
\mathbf{u}=\bar{\mathbf{u}}\quad\text{on}\quad\partial\Omega,
\end{align}
\end{subequations}
where we have set
\(\tilde{\mathbf{u}}\eqldef \tilde{\mathbf{v}}+\bar{\mathbf{u}}\).
Here \(\vr\in\W^{2,2}(\Omega)\), $\vr \geq 0$ is the unique solution of the problem
\begin{subequations}
\label{eq:exists2}
\begin{align}
\label{eq:exists2_1}
&\delta\vr-\delta\Delta
\vr+\dive(T(\vr)\tilde{\mathbf{u}})=\delta\vr_M
\quad\text{in}\quad\Omega,\\&
\label{eq:exists2_2}
\nabla\vr\cdot\mathbf{n}=0\quad\text{on}\quad\partial\Omega,\\&
\label{eq:exists2_3}
\int_{\Omega}\vr\dd x=M.
\end{align}
\end{subequations}
Here above the finite dimensional space \(X^d\) may be endowed with the Hilbert topology of 
\(\W^{1,2}_0(\Omega)\).
Thus the system \eqref{eq:exists1_1} has the following form:
\begin{equation*}
\int_{\Omega}\mathbb{S}(\nabla\mathbf{u})\cdot\nabla\mathbf{w}^i\,\dd x=
\langle \mathbf{F}^i,\mathbf{w}^i\rangle_{\W^{-1,2}(\Omega),\W_0^{1,2}(\Omega)}
\quad\text{for all}\quad i=1,\ldots, d,
\end{equation*}
where \(\mathbf{F}^i\) is an element of the dual \(\W^{-1,2}(\Omega)\). 
Consequently, we may deduce from
Lemma \ref{lemma1}, \eqref{E12} and Lax--Milgram theorem, that \(\mathcal{A}\) is well--defined.
Note that the continuity of \(\mathcal{A}\) follows from Lemma \ref{lemma1} while
the compactness of \(\mathcal{A}\) comes from the fact that \(X^d\) is a finite--dimensional space.

Finally, to prove existence of a fixed point of \(\mathcal{A}\), it suffices to verify 
the last assumption of Lemma \ref{fix_point}. 
Suppose that for an arbitrary \(\theta\in]0,1]\), there exists \(\mathbf{v}_{\theta}\)
a fixed point of \(\theta\mathcal{A}\). For simplicity, the subscript \(\theta\) will be omitted in the
sequel. Then we consider the following problem:
\begin{subequations}
\label{eq:exists22}
\begin{align}
\label{eq:exists22_1}
&\delta\vr-\delta\Delta
\vr+\dive(T(\vr)\mathbf{u})=\delta\vr_M
\quad\text{in}\quad\Omega,\\&
\label{eq:exists22_3}
\int_{\Omega}\mathbb{S}(\nabla\mathbf{v}+\theta\nabla\bar{\vu}):\nabla\mathbf{w}^i\,\dd x=
\theta\int_{\Omega}
(T(\vr)\mathbf{u}\otimes\mathbf{u}):\nabla\mathbf{w}^i\,\dd x\\&\notag
+\theta\int_{\Omega}(p_{R}(\vr)+\sqrt{\delta}\vr)\dive\mathbf{w}^i\,\dd x\\&\notag
-\theta\delta\int_{\Omega}\nabla(\vr\mathbf{u})\cdot\nabla\mathbf{w}^i\,\dd x
-\theta\int_{\Omega}\delta\vr\mathbf{u}\mathbf{w}^i\,\dd x
\quad\text{for all}\quad i=1,\ldots, d,\\&
\label{eq:exists22_4}
\nabla\vr\cdot\mathbf{n}=0\quad\text{and}\quad
\mathbf{u}=\bar{\mathbf{u}}\quad\text{on}\quad\partial\Omega,\\&
\label{eq:exists22_5}
\int_{\Omega}\vr\,\dd x=M.
\end{align}
\end{subequations}
Since \(\dive\bar{\mathbf{u}}=0\), we may deduce from \eqref{eq:exists22_3} that
\begin{equation}
\label{eq:exists4}
\begin{aligned}
&\int_{\Omega}(\mathbb{S}(\nabla\mathbf{u})-
\mathbb{S}(\nabla\bar{\mathbf{u}}))
:\nabla(\mathbf{u}-\bar{\mathbf{u}})\,\dd x
+\theta\int_{\Omega}
\dive(T(\vr)\mathbf{u}\otimes\mathbf{u})\cdot(\mathbf{u}-\bar{\mathbf{u}})\,\dd x\\&
=\theta\int_{\Omega} (p_{R}(\vr)+\sqrt{\delta}\vr)\dive\mathbf{u}\,\dd x
-\theta\delta\int_{\Omega}\nabla(\vr\mathbf{u}):
\nabla(\mathbf{u}-\bar{\mathbf{u}})\,\dd x\\&
-\theta\int_{\Omega}\delta\vr\mathbf{u}\cdot(\mathbf{u}-\bar{\mathbf{u}})\,\dd x
-\theta\int_{\Omega}\mathbb{S}(\nabla\bar{\mathbf{u}}):\nabla(\mathbf{u}-\bar{\mathbf{u}})\,\dd x.
\end{aligned}
\end{equation}
Notice that the renormalized form of \eqref{eq:exists22_1} is given by
\begin{equation}
\label{E6a}
\begin{aligned}
&\Div( H(\vr) \vu ) +(G'(\vr) T(\vr) - H(\vr)) \Div\vu\\& -
\delta G'(\vr) \Delta \vr+ \delta G'(\vr) \vr = \delta\vr_M G'(\vr),
\end{aligned}
\end{equation}
where $H$ and $G$ are Lipschitz functions satisfying
\begin{equation*}
H'(\vr) = G'(\vr) T'(\vr).
\end{equation*}
Now we choose for any \(\vr>0\), \(G\) and \(H\) as follows:
\begin{equation*}
G(\vr)=G_{R,\delta}(\vr)
\quad\text{and}\quad
H(\vr)\eqldef -p_R(\vr_M)-\sqrt{\delta}\vr_M+\int_{\vr_M}^{\vr}G_{R,\delta}'(y)T'(y)\,\dd y,
\end{equation*}
where
\begin{equation*}
G_{R,\delta}'(\vr)\eqldef \int_{\vr_M}^{\vr}\frac{p_R'(y)+\sqrt{\delta}}{T(y)}\,\dd y.
\end{equation*}
Then we infer that
\begin{equation}
\label{E8}
G_{R,\delta}'(\vr) T(\vr) - H(\vr) = p_R(\vr) + \sqrt{\delta} \vr.
\end{equation}
We also set
\begin{equation*}
G_{\delta}'(\vr)\eqldef\int_{\vr_M}^{\vr}\frac{p'(y)+\sqrt{\delta}}{T(y)}\,\dd y.
\end{equation*}
Consequently, integrating \eqref{E6a}, we find
\begin{equation}
\label{E9}
\intO{(p_R(\vr) {+} \sqrt{\delta} \vr) \Div\vu}
=\intO{\bigl(\delta G_{R,\delta}'(\vr) \Delta \vr {-} \delta G_{R,\delta}'(\vr) \vr
{+} \delta\vr_M G_{R,\delta}'(\vr)\bigr)}.
\end{equation}
Since \(\mathbf{v}\) is regular enough, we may deduce from Lemma \ref{lemma1}
that \(\vr\geq\underline{\vr}\) with \(\underline{\vr}>0\) and it follows that
\(G_{R,\delta}'(\vr)\in\L^{\infty}(\Omega)\).
We multiply \eqref{E9} by \(\theta\) and we use the resulting expression
in \eqref{eq:exists4} to get
\begin{equation}
\label{eq:exists7}
\begin{aligned}
&\int_{\Omega}(\mathbb{S}(\nabla\mathbf{u})-
\mathbb{S}(\nabla\bar{\mathbf{u}}))
:\nabla(\mathbf{u}-\bar{\mathbf{u}})\,\dd x+
\delta\theta\int_{\Omega}\vr G_{R,\delta}'(\vr)\dd x\\&
+\delta\theta\int_{\Omega}\abs{\nabla\vr}^2G_{R,\delta}''(\vr)\,\dd x
=\theta\delta\vr_M
\int_{\Omega}G_{R,\delta}'(\vr)\,\dd x\\&
-\theta\int_{\Omega}
\dive(T(\vr)\mathbf{u}\otimes\mathbf{u})
\cdot(\mathbf{u}-\bar{\mathbf{u}})\,\dd x
-\delta\theta\int_{\Omega}\nabla(\vr\mathbf{u}):
\nabla(\mathbf{u}-\bar{\mathbf{u}})\,\dd x\\&
-\theta\delta\int_{\Omega}\vr\mathbf{u}\cdot(\mathbf{u}-\bar{\mathbf{u}})\,\dd x
-\theta\int_{\Omega}\mathbb{S}(\nabla\bar{\mathbf{u}}):\nabla(\mathbf{u}-\bar{\mathbf{u}})\,\dd x.
\end{aligned}
\end{equation}
Notice that \(\mathbf{u}\cdot\mathbf{n}=0\) on \(\partial\Omega\) implies that
\begin{equation}
\label{eq:exists8}
-\int_{\Omega}\dive(T(\vr)\mathbf{u}\otimes\mathbf{u})\cdot\mathbf{u}\,\dd x=
-\frac12\int_{\Omega}\dive(T(\vr)\mathbf{u})\abs{\mathbf{u}}^2\,\dd x.
\end{equation}
Carrying \eqref{eq:exists22_1} into \eqref{eq:exists8}, we get
\begin{equation}
\label{eq:exists9}
-\int_{\Omega}\dive(T(\vr)\mathbf{u}\otimes\mathbf{u})\cdot\mathbf{u}\,\dd x
=\frac12\int_{\Omega}\bigl(\delta\vr-\delta\Delta
\vr-\delta\vr_M\bigr)\abs{\mathbf{u}}^2\,\dd x.
\end{equation}
Furthermore, the boundary condition \eqref{eq:exists22_4} leads to
\begin{equation}
\label{eq:exists10}
-\frac12\int_{\Omega}\abs{\mathbf{u}}^2\Delta\vr\,\dd x=
\int_{\Omega}{\mathbf{u}}\cdot(\nabla\mathbf{u}\cdot\nabla\vr)\,\dd x.
\end{equation}
Inserting \eqref{eq:exists10} into \eqref{eq:exists8}, we find
\begin{equation}
\label{eq:exists11_1}
\begin{aligned}
&-\int_{\Omega}\dive(T(\vr)\mathbf{u}\otimes\mathbf{u})
\cdot\mathbf{u}\,\dd x=
\frac{\delta}2\int_{\Omega}\vr\abs{\mathbf{u}}^2\,\dd x\\&
+
\delta\int_{\Omega}\mathbf{u}\cdot(\nabla\mathbf{u}\cdot\nabla\vr)\,\dd x
-\frac{\delta\vr_M}2\int_{\Omega}\abs{\mathbf{u}}^2\,\dd x.
\end{aligned}
\end{equation}
On the other hand, we have
\begin{equation}
\label{eq:exists12_1}
\begin{aligned}
&\int_{\Omega}\dive(T(\vr)\mathbf{u}\otimes\mathbf{u})
\cdot\bar{\mathbf{u}}\,\dd x=
\int_{\Omega}(
-\delta\vr+\delta\Delta\vr+\delta\vr_M)
\mathbf{u}\cdot\bar{\mathbf{u}}\,\dd x\\&+
\int_{\Omega}
T(\vr)(\mathbf{u}\cdot\nabla\mathbf{u})\cdot\bar{\mathbf{u}}\,\dd x=
-\delta\int_{\Omega}\vr\mathbf{u}\cdot\bar{\mathbf{u}}\,\dd x-
\delta\int_{\Omega}(\nabla\vr\cdot\nabla{\mathbf{u}})
\cdot\bar{\mathbf{u}}\,\dd x\\&-
\delta\int_{\Omega}(\nabla\vr\cdot\nabla\bar{\mathbf{u}})\cdot\mathbf{u}\,\dd x
+\delta\vr_M \int_{\Omega}\mathbf{u}\cdot\bar{\mathbf{u}}\,\dd x
+\int_{\Omega}T(\vr)(\mathbf{u}\cdot\nabla\mathbf{u})\cdot\bar{\mathbf{u}}\,\dd x.
\end{aligned}
\end{equation}
We may also observe that
\begin{equation}
\label{eq:exists13_1}
\begin{aligned}
&\int_{\Omega}\nabla(\vr\mathbf{u}):
\nabla(\mathbf{u}-\bar{\mathbf{u}})\,\dd x\\&=
\int_{\Omega}\mathbf{u}\cdot\nabla(\mathbf{u}-\bar{\mathbf{u}})\cdot\nabla\vr\,\dd x
+\int_{\Omega}\vr\nabla\mathbf{u}:\nabla(\mathbf{u}-\bar{\mathbf{u}})\,\dd x.
\end{aligned}
\end{equation}
According to \eqref{eq:exists11_1}, \eqref{eq:exists12_1} and \eqref{eq:exists13_1} in
\eqref{eq:exists7}, we get
\begin{equation*}
\begin{aligned}
&\int_{\Omega}(\mathbb{S}(\nabla\mathbf{u})-
\mathbb{S}(\nabla\bar{\mathbf{u}}))
:\nabla(\mathbf{u}-\bar{\mathbf{u}})\,\dd x+
\delta\theta\int_{\Omega}\vr G_{R,\delta}'(\vr)\,\dd x+\delta\theta
\int_{\Omega}\abs{\nabla \vr}^2G_{R,\delta}''(\vr)\,\dd x\\&
+\frac{\theta\delta\vr_M}2\int_{\Omega}\abs{\mathbf{u}}^2\,\dd x+
\delta\theta\int_{\Omega}\vr\abs{\nabla\mathbf{u}}^2\,\dd x
+\frac{\delta\theta}2\int_{\Omega}
\vr\abs{\mathbf{u}}^2\,\dd x=
\theta\delta\vr_M\int_{\Omega}G_{R,\delta}'(\vr)\,\dd x\\&+
\theta\delta\vr_M\int_{\Omega}\mathbf{u}\cdot\bar{\mathbf{u}}\,\dd x
-\delta\theta\int_{\Omega}(\nabla\vr\cdot\nabla\mathbf{u})
\cdot\bar{\mathbf{u}}\,\dd x
+\theta\int_{\Omega}T(\vr)(\mathbf{u}\cdot\nabla\mathbf{u})\cdot\bar{\mathbf{u}}\,\dd x\\&
+\delta\theta\int_{\Omega}\vr\nabla\mathbf{u}:\nabla\bar{\mathbf{u}}\,\dd x-\theta
\int_{\Omega}\mathbb{S}(\nabla\bar{\mathbf{u}}):\nabla(\mathbf{u}-\bar{\mathbf{u}})\,\dd x.
\end{aligned}
\end{equation*}
By using \eqref{E12}, we obtain
\begin{equation}
\label{eq:exists15}
\begin{aligned}
&\frac1{C_P}\| \vu- \bar{\vu} \|^2_{\W^{1,2}(\Omega)}
+\delta\theta\int_{\Omega}\vr\abs{\nabla \vu}^2\,\dd x
+
\delta\theta\int_{\Omega}\vr G_{R,\delta}'(\vr)\,\dd x\\&
+\delta\theta
\int_{\Omega}\Bigl(\frac{p'(\vr)+\sqrt{\delta}}{T(\vr)}\Bigr)
\abs{\nabla \vr}^2\,\dd x+\frac{\theta\delta\vr_M}2\int_{\Omega}\abs{\mathbf{u}}^2\,\dd x
\leq
\theta\delta\vr_M\int_{\Omega}G_{R,\delta}'(\vr)\,\dd x\\&
-\delta\theta\int_{\Omega}(\nabla\vr\cdot\nabla\mathbf{u})
\cdot\bar{\mathbf{u}}\,\dd x
+\theta\int_{\Omega}T(\vr)(\mathbf{u}\cdot\nabla\mathbf{u})
\cdot\bar{\mathbf{u}}\,\dd x+
\theta\delta\vr_M\int_{\Omega}\vu\cdot\bar{\mathbf{u}}\,\dd x\\&
+\delta\theta\int_{\Omega}\vr\nabla\mathbf{u}:\nabla\bar{\mathbf{u}}\,\dd x-\theta
\int_{\Omega}\mathbb{S}(\nabla\bar{\mathbf{u}}):\nabla(\mathbf{u}-\bar{\mathbf{u}})\,\dd x.
\end{aligned}
\end{equation}
The next observation is that
\begin{equation*}
\begin{aligned}
\vr_M \int_{\Omega}
G_{R,\delta}'(\vr)\,\dd x&\leq \vr_M \int_{\{\vr\geq \vr_M\}}G_{R,\delta}'(\vr)\,\dd x\\
&\leq \vr_M\int_{\{\vr_M\leq\vr\leq\lambda\vr_M\}}
G_{R,\delta}'(\vr)\, \dd x +
\vr_M\int_{\{\vr> \lambda \vr_M\}} G_{R,\delta}'(\vr)\,\dd x\\
&\leq M G_{R,\delta}'(\lambda\vr_M)
+ \frac{1}{\lambda}\int_{\{\vr>\lambda\vr_M\}} \vr G_{R,\delta}'(\vr)\,\dd x\\
&\leq M G_{R,\delta}'(\lambda\vr_M)
+ \frac{1}{\lambda} \int_{\{\vr\geq\vr_M\}}{\vr G_{R,\delta}'(\vr)}\,\dd x,
\end{aligned}
\end{equation*}
where $\lambda > 1$ has been chosen in such a way that
\(\lambda \vr_M < \bar{\vr}\).
Note that $G_{R,\delta}$ is independent of $R$ in the interval
$[0, \lambda\vr_M]$ as soon as $\bar{\vr} - \frac{1}{R} > \lambda
\vr_M$. Consequently, going back to \eqref{eq:exists15}, we obtain
\begin{equation}
\label{eq:exists16}
\begin{aligned}
&\frac1{C_P}\| \vu - \bar{\vu} \|^2_{\W^{1,2}(\Omega)}
+\delta\theta\int_{\Omega}\vr\abs{\nabla\vu}^2\,\dd x
+
\delta\omega\theta\int_{\{\vr\geq\vr_M\}}\vr G_{R,\delta}'(\vr)\,\dd x\\&
+\delta\theta
\int_{\Omega}\Bigl(\frac{p'(\vr)+\sqrt{\delta}}{T(\vr)}\Bigr)
\abs{\nabla \vr}^2\,\dd x+\frac{\theta\delta\vr_M}2\int_{\Omega}\abs{\mathbf{u}}^2\,\dd x\leq
\delta\theta\vr_M\int_{\Omega}\mathbf{u}\cdot\bar{\vu}\,\dd x\\&
-\delta\theta\int_{\Omega}(\nabla\vr{\cdot}\nabla\mathbf{u})
\cdot\bar{\mathbf{u}}\,\dd x
+\theta\int_{\Omega}T(\vr)(\mathbf{u}{\cdot}\nabla\mathbf{u})\cdot\bar{\mathbf{u}}\,\dd x
+\delta\theta\int_{\Omega}\vr\nabla\mathbf{u}{:}\nabla\bar{\mathbf{u}}\,\dd x\\&
-\delta\theta\int_{\{0\leq\vr\leq\vr_M\}}\vr G_{R,\delta}'(\vr)\,\dd x
-\theta
\int_{\Omega}\mathbb{S}(\nabla\bar{\mathbf{u}}):\nabla(\mathbf{u}-\bar{\mathbf{u}})\,\dd x
+\delta\theta C(M,p),
\end{aligned}
\end{equation}
for certain $\omega > 0$, where $C(M,p)$ depends only on $M$ and
the structural properties of the function $p$. In particular, the estimate
is independent of $R$ and $\delta$. We focus now on the difficult terms that should
be controlled in \eqref{eq:exists16}. First, we observe that
there exists a constant \(C_1>0\) independent of \(R\) and \(\delta\) such that
\begin{equation*}
\begin{aligned}
\intO{ T(\vr) \vu \cdot \Grad \vu \cdot \bar{\vu} }
&\leq \bar{\vr} \|\Grad
\vu \|_{\L^2(\Omega)} \| \vu \|_{\L^4(\Omega)} \| \bar{\vu} \|_{\L^4(\Omega)}
\leq  C_1 \| \bar{\vu} \|_{\L^4(\Omega)} \| \vu \|_{\W^{1,2}(\Omega)}^2
\\&\leq
2C_1\norm[\L^4(\Omega)]{\bar{\vu}}(\norm[\W^{1,2}(\Omega)]{\vu-\bar\vu}^2+
\norm[\W^{1,2}(\Omega)]{\bar{\vu}}).
\end{aligned}
\end{equation*}
as long as \(N=2,3\).
In accordance with Lemma \ref{aL1} established below, the extension $\bar{\vu}$ can be chosen
by such a way that $\|\bar{\vu} \|_{\L^4(\Omega)}$
is arbitrarily small provided $\bar{\vu} \in \W^{1,p}(\Omega)$ with $p > N$.
We will therefore consider such an extension supposing
\begin{equation}
\label{E15}
\bar{\vu} \in\W^{1,p}(\Omega)\quad\text{with}\quad p > N\quad\text{and}\quad
\bar{\vu} \cdot \vc{n}= 0\quad\text{on}\quad\partial \Omega.
\end{equation}
Secondly, we have
\begin{equation}
\label{E15b}
\begin{aligned}
&-\delta\theta\int_{\Omega}(\nabla\vr\cdot\nabla\vu)\cdot\bar{\vu}\,\dd x
\leq \frac{\theta\delta^{\frac32+r}}2\norm[\L^{\infty}(\Omega)]{\bar{\vu}}
\norm[\L^2(\Omega)]{\nabla\vr}^2
\\&+\theta\delta^{\frac12-r}
\norm[\L^{\infty}(\Omega)]{\bar\vu}(\norm[\W^{1,2}(\Omega)]{\vu-\bar\vu}^2+\norm[\L^2(\Omega)]{
\nabla\bar\vu}^2)
\end{aligned}
\end{equation}
with \(r\in(0,\frac12)\). The first term on the right hand side is controlled by
\(\theta\delta^{\frac32}\int_{\Omega}\frac{\abs{\nabla\vr}^2}{T(\vr)}\dd x\)
(take \(\delta\) small enough)
while the second one is controlled by \(\frac1{C_P}\norm[\W^{1,2}(\Omega)]{\vu-\bar\vu}^2\).
On the other hand, we have
\begin{equation*}
\delta\theta\int_{\Omega}\vr\nabla\vu:\nabla\bar\vu\dd x
\leq
\frac{\delta\theta}2\int_{\Omega}\vr\abs{\nabla\vu}^2\,\dd x
+\frac{\delta\theta}4\norm[\L^4(\Omega)]{\nabla\bar\vu}^4+
\frac{\delta\theta}4\norm[\L^2(\Omega)]{\vr}^2.
\end{equation*}
Now from the definition of \(G_{R,\delta}'\), we have
\begin{equation}
\label{GR}
G_{R,\delta}'(\vr)\geq \frac1{\bar\vr}
\bigl(p_R(\vr)-p_R(\vr_M)+\sqrt{\delta}(\vr-\vr_M)\bigr)\quad\text{for}\quad\vr\geq\vr_M.
\end{equation}
The expression of \(p_R\) leads to
\begin{equation*}
G_{R,\delta}'(\vr)\geq \frac{\eta\vr}{\bar\vr}\quad\text{for}\quad\vr\geq\bar\vr,
\end{equation*}
with \(\eta>0\) and \(R\) taken large enough. We have
\begin{equation*}
\norm[\L^2(\Omega)]{\vr}^2=
\int_{\vr\leq\bar\vr}\vr^2\,\dd x+\int_{\vr>\bar\vr}\vr^2\,\dd x
\leq\bar{\vr}^2\abs{\Omega}+\frac{\bar\vr}{\eta}\int_{\vr>\bar\vr}\vr G_R'(\vr)\,\dd x.
\end{equation*}
We conclude applying Lemma \ref{fix_point} that a solution \((\vu,\vr)\)
to problem \eqref{weak_penalty}
satisfies as well the following inequality:
\begin{equation}
\label{E16}
\begin{aligned}
&
\| \vu - \bar{\vu} \|^2_{\W^{1,2}(\Omega)}
+ \delta \intO{\vr| \Grad \vu|^2}
+ \delta\intO{\Bigl(\frac{p'_R(\vr)+\sqrt{\delta}}{T(\vr)}\Bigr) |\Grad \vr|^2}
\\&
+ \delta\int_{\{\vr\geq\vr_M\}}{G_{R,\delta}' (\vr) \vr}\,\dd x \leq C_2,
\end{aligned}
\end{equation}
where $C_2>0$ is a constant independent on $d$, $R$ and $\delta$. 
This proves the uniform bound of $\mathbf{v_\theta} = \mathbf{v}$, solution of 
\eqref{eq:exists22}, 
and thus the existence result to problem \eqref{weak_penalty} by 
using Lemma \ref{fix_point}.
Besides, the standard elliptic theory implies that there exists a constant \(C_{\delta}>0\)
depending only on \(\delta\) such that
\begin{equation}
\label{E16b}
\| \vr \|_{\W^{2,2}(\Omega)} \leq C_{\delta}.
\end{equation}

\subsection{Limit $d \to +\infty$}
\label{d}

The passage to the limit as \(d\) tends to \(+\infty\) in \eqref{weak_penalty}
follows from estimates \eqref{E16} and \eqref{E16b} in \eqref{weak_penalty},
the verification is let to the reader. Then, we may conclude that the
limit solution, denoted once again by \((\vu,\vr)\), satisfies the following system:
\begin{subequations}
\label{weak_penalty0}
%\begin{empheq}[left=\iteqlbrace \,]{align}
\begin{align}
\notag
&\text{Find }\mathbf{u}= \mathbf{v}+\bar{\mathbf{u}}, \mathbf{v}\in\W_0^{1,2}
(\Omega)\text{ and }\vr\in\W^{2,2}(\Omega)\text{ such that for all }
\mathbf{w}\in\W_0^{1,2}(\Omega),\\&
\label{weak_penalty01}
\int_{\Omega}\mathbb{S}(\nabla\mathbf{u})\cdot\nabla\mathbf{w}\,\dd x=
\int_{\Omega}
(T(\vr){\mathbf{u}}\otimes{\mathbf{u}}):\nabla\mathbf{w}\,\dd x\\&\notag
+\int_{\Omega}(p_R(\vr)+\sqrt{\delta}\vr)\dive\mathbf{w}\,\dd x
-\delta\int_{\Omega}\nabla(\vr{\mathbf{u}})\cdot\nabla\mathbf{w}\,\dd x
-\int_{\Omega}\delta\vr{\mathbf{u}}\mathbf{w}\,\dd x,\\&
\label{weak_penalty02}
\delta\vr-\delta\Delta
\vr+\dive(T(\vr)\mathbf{u})=\delta\vr_M
\quad\text{in}\quad\Omega,\\&
\label{weak_bound_cond0}
\mathbf{u}=\bar{\mathbf{u}}\quad\text{and}\quad
\nabla\vr\cdot\mathbf{n}=0\quad\text{on}\quad\partial\Omega,\\&
\label{totalmassweak0}
\int_{\Omega}\vr\,\dd x=M.
%\end{empheq}
\end{align}
\end{subequations}

\subsection{Limit $R \to +\infty$}
\label{R}

In the present section, our goal consists to perform the asymptotic limit for $R$
tending to $+\infty$ in \eqref{weak_penalty01}--\eqref{weak_bound_cond0}.
Here the family of solutions of \eqref{weak_penalty01}--\eqref{weak_bound_cond0}
is denoted by $(\vuR,\vrR)_{R >\frac1{\bar{\vr}}}$.
Now, keeping $\delta > 0$ fixed, the compactness provided by the
artificial viscosity approximation plays a crucial role as well as
the inequalities \eqref{E16} and \eqref{E16b} which
remain valid when \((\vu,\vr)\) is replaced by \((\vu_R,\vr_R)\).
Since \eqref{E16b} holds and $N=2,3$, we may assume
\begin{equation}
\label{E17}
\vrR \to \vr \quad\text{in}\quad\C^0(\bar{\Omega}) \quad\text{and}\quad\vr \geq 0.
\end{equation}
Next we observe that the limit density satisfies
\begin{equation}
 \label{E18}
0 \leq \vr < \bar{\vr} \quad\text{a.e. in}\quad\bar{\Omega}.
\end{equation}
Indeed, let us take for any \(Z\) such that \(\vr_M<Z<\bar{\vr}\):
\begin{equation*}
F_Z(\vr) = \min \Bigl(1, \frac{2}{ \bar{\vr} - Z} (\vr - Z )^+ \Bigr).
\end{equation*}
We proceed by contradiction; let us suppose that \(\text{meas}\{\vr\geq \bar{\vr}\}>0\).
Then for any \(Z\in]\vr_M,\bar{\vr}[\), we get
\begin{equation}
\label{E18b}
\int_{\Omega}F_Z(\vr)\,\dd x\geq\int_{\{\vr\geq\bar{\vr}\}}1\,\dd x>0.
\end{equation}
Now we have
\begin{equation*}
\begin{aligned}
\int_{\Omega} F_Z(\vrR)\dd x& \leq
\frac{1}{G'_{R, \delta}(Z)}\int_{\{ \vrR \geq Z \}} G'_{R, \delta}(Z) \dd x
\\&\leq  \frac{1}{Z G'_{R, \delta}(Z)} \int_{\{ \vrR \geq Z \}} \vrR G'_{R, \delta}(\vrR) \dd x.
\end{aligned}
\end{equation*}
In view of the uniform bounds established in \eqref{E16}, we may let $R$ tends to $+\infty$
obtaining that there exists a constant \(C_\delta>0\) independent 
on \(R\) such that
\begin{equation*}
\int_{\Omega} F_Z(\vr)\dd x \leq \frac{C_\delta}{ Z G_{\delta}'(Z)}.
\end{equation*}
By using \eqref{GR} (remains still valid if
\(G'_{R,\delta}(Z)\) is replaced by \(G'_{\delta}(Z)\) and \(p_R\) by $p$), 
we deduce that
\(G'_{\delta}(Z)\) tends to \(+\infty\) as \(z\)  tends to \(\bar{\vr}\). According to the above inequality,
we may conclude that this fact contradicts the estimate \eqref{E18b}.
Furthermore, we have \(0\leq\vr\leq\bar{\vr}_{\delta}\) where
\(\bar{\vr}_{\delta}\) is a constant such that \(0<\bar{\vr}_{\delta}<\bar{\vr}\).

Finally, we need to exhibit bounds on the pressure. To this aim, we remark that
\eqref{weak_penalty01} and the estimate \eqref{E16} lead to
\begin{equation}
\label{E19}
\| \Grad p_R(\vrR) \|_{\W^{-1,2}(\Omega)} \leq c(\delta).
\end{equation}
In addition, as $\int_{\Omega}\vrR G'_{R, \delta}(\vrR)\dd x$ is uniformly bounded
and using \eqref{GR}, we deduce that
\begin{equation}
\label{E20}
\| p_R (\vrR) \|_{\L^1(\Omega)} \leq c(\delta),
\end{equation}
where \(c(\delta)>0\) is a constant depending on \(\delta\).\\
Using \eqref{E19} and \eqref{E20}, there exists a limit denoted by \(\Ov{p(\vr)}\) such that
\begin{equation*}
p_R(\vrR) \to \Ov{p(\vr)} \quad{in}\quad\L^2(\Omega)\quad\text{weak}.
\end{equation*}
In view of \eqref{E17} and \eqref{E18}, we have
\begin{equation*}
\Ov{p(\vr)} = p(\vr).
\end{equation*}
Letting $R$ tends to $+\infty$ in \eqref{weak_penalty01}--\eqref{weak_bound_cond0},
we obtain the following system of equations:
\begin{subequations}
\label{E02}
\begin{align}
\label{E21}
&\Div (\vr \vu) - \delta \Del \vr + \delta \vr =\delta \vr_M\quad\text{in}\quad
\Omega,\\
\label{E22}
&\Div (\vr \vu \otimes \vu ) + \Grad p (\vr) + \sqrt{\delta} \Grad \vr =
\Div (\mathbb{S}(\Grad \vu)) + \delta \Del (\vr \vu) -  \delta \vr \vu
\quad\text{in}\quad\Omega,\\
\label{E23}
&\Grad \vr \cdot \vc{n} = 0\quad\text{and}\quad\vc{u}= \bar{\vu}
\quad\text{on}\quad\partial \Omega,\\&
\label{E23b}
\int_{\Omega}\vr\,\dd x=M,
\end{align}
\end{subequations}

\subsection{Limit $\delta \to 0$}
\label{limit_delta}

Our ultimate goal is to perform the limit $\delta$ tending to $0$ in system \eqref{E02}.
Let $(\vud,\vrd)$ be the approximate solutions of problem \eqref{E02}.
In addition to the estimates \eqref{E16} that hold uniformly for $\delta$ tending to $0$, we
have at hand a rather strong bound on the density, namely,
\begin{equation}
 \label{E24}
0 \leq \vrd < \bar{\vr}\quad\text{a.e. in}\quad\Omega.
\end{equation}
Consequently, passing to suitable subsequences as the case may be,
we may suppose
\begin{equation}
\label{E25}
\vrd \rightharpoonup \vr \quad\text{in}\quad\L^\infty(\Omega)\quad\text{weak-*}
\quad\text{and}\quad\vud \rightharpoonup
\vu \quad\text{in}\quad\W^{1,2}(\Omega)\quad\text{weak},
\end{equation}
where
\begin{equation*}
0 \leq \vr \leq \bar{\vr},\quad \int_{\Omega}{\vr}\dd x = M
\quad\text{and}\quad\vu=\bar{\vu}\quad\text{on}\quad\partial \Omega.
\end{equation*}

\subsubsection{A preliminary result}
\label{Prem_result}

\begin{lemma}
\label{Lemma_prem}
Assume that $p: [0,\bar{\vr}) \to \R$ belongs to $\C[0,\bar \vr)$, is non decreasing and
$ \lim_{\vr \to \bar{\vr}} p(\vr)= \infty$. Let
$\vrd \in\L^\infty(\Omega)$ be a sequence such that
\begin{equation*}
0 \leq \vrd < \Ov{ \vr } \quad\text{and}\quad
\|p(\vrd)\|_{\L^\beta(\Omega)}  \leq C,
\end{equation*}
with $C$ a constant independent on $\delta$ and $\beta \in (1,+\infty)$. Suppose that
\begin{equation*}
\begin{aligned}
&\vrd \rightharpoonup \vr\quad\text{in}\quad\L^\infty(\Omega)\quad\text{weak-*},\\
&p(\vrd) \rightharpoonup \Ov{p(\vr)}\quad\text{and}\quad
 \vrd p(\vrd) \rightharpoonup \Ov{\vr p(\vr)}\quad\text{in}\quad\L^\beta(\Omega)\quad\text{weak}.
 \end{aligned}
 \end{equation*}
Then we have
\begin{subequations}
\begin{align}
\label{strictrhob}
&0 \leq \vr < \bar{\vr} \quad\text{a.e. in}\quad\Omega,\\
\label{identity}
&\text{If, in addition, } \; \Ov{\vr p(\vr)}=\vr \Ov{ p(\vr)}\quad\text{then}\quad\Ov{ p(\vr)} = p(\vr).
\end{align}
\end{subequations}
\end{lemma}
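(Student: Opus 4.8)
The plan is to exploit the convexity-type monotonicity of $p$ together with the standard Minty/convexity trick that governs weak limits of monotone nonlinearities. First I would prove \eqref{strictrhob}. The idea is the same as in the passage from \eqref{E16} to \eqref{E18} but now at the level of the sequence $\vrd$: fix $Z \in (0, \bar\vr)$ and test the bound $\|p(\vrd)\|_{\L^\beta(\Omega)} \le C$ against the set $\{\vrd \ge Z\}$. Since $p$ is non-decreasing,
\begin{equation*}
p(Z)^\beta \operatorname{meas}\{\vrd \ge Z\} \le \int_{\{\vrd \ge Z\}} p(\vrd)^\beta \,\dd x \le C^\beta,
\end{equation*}
hence $\operatorname{meas}\{\vrd \ge Z\} \le C^\beta / p(Z)^\beta$, uniformly in $\delta$. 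Introducing the truncation $F_Z$ as in the proof of \eqref{E18}, namely $F_Z(\vr)=\min(1, \frac{2}{\bar\vr - Z}(\vr-Z)^+)$, one has $F_Z(\vrd) \le \mathbf{1}_{\{\vrd \ge Z\}}$, so $\int_\Omega F_Z(\vrd)\,\dd x \le C^\beta / p(Z)^\beta$. Passing to the weak-* limit (here $F_Z$ is bounded and continuous, and one must be slightly careful: $\vrd \rightharpoonup \vr$ weak-* only gives $\int F_Z(\vrd) \to \overline{F_Z(\vr)}$, but since $F_Z$ is concave and non-decreasing one still gets $\int_\Omega F_Z(\vr)\,\dd x \le \liminf \int_\Omega F_Z(\vrd)\,\dd x$ by weak lower semicontinuity, or more simply $F_Z(\vr) \le \overline{F_Z(\vr)}$ a.e. by concavity). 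Letting $Z \to \bar\vr$ and using $p(Z) \to \infty$, we obtain $\int_\Omega F_Z(\vr)\,\dd x \to 0$, while if $\operatorname{meas}\{\vr \ge \bar\vr\} > 0$ then $\int_\Omega F_Z(\vr)\,\dd x \ge \operatorname{meas}\{\vr \ge \bar\vr\} > 0$ for every $Z$, a contradiction. This proves $\vr < \bar\vr$ a.e.

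For \eqref{identity} I would use the monotone-operator argument. Since $p$ is continuous and non-decreasing on $[0,\bar\vr)$, for every $\delta$ and every constant $k \ge 0$ (with $k < \bar\vr$) we have the pointwise inequality $(p(\vrd) - p(k))(\vrd - k) \ge 0$, i.e.
\begin{equation*}
\vrd p(\vrd) - p(k)\vrd - p(\vrd) k + p(k) k \ge 0 \quad\text{a.e. in } \Omega.
\end{equation*}
Integrating over any measurable set, or rather testing against an arbitrary non-negative $\varphi \in \L^\infty(\Omega)$, and passing to the weak limit using $\vrd p(\vrd) \rightharpoonup \overline{\vr p(\vr)}$, $p(\vrd) \rightharpoonup \overline{p(\vr)}$ and $\vrd \rightharpoonup \vr$, we get
\begin{equation*}
\overline{\vr p(\vr)} - p(k)\vr - \overline{p(\vr)} k + p(k) k \ge 0 \quad\text{a.e. in } \Omega.
\end{equation*}
Now invoke the hypothesis $\overline{\vr p(\vr)} = \vr\, \overline{p(\vr)}$, which turns this into $(\overline{p(\vr)} - p(k))(\vr - k) \ge 0$ a.e., for every constant $k \in [0,\bar\vr)$. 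Since $\vr(x) < \bar\vr$ a.e. by \eqref{strictrhob}, at a.e. point $x$ I may choose $k = \vr(x) \pm \eta$ for small $\eta > 0$ (this is a legitimate constant if one first works on the set where $\vr$ lies in a compact subinterval of $[0,\bar\vr)$, then exhausts $\Omega$ by such sets), divide by $\vr(x) - k = \mp\eta$, and let $\eta \to 0$ using continuity of $p$ to conclude $\overline{p(\vr)}(x) = p(\vr(x))$ at a.e.\ such $x$. A standard exhaustion argument then gives $\overline{p(\vr)} = p(\vr)$ a.e.\ in $\Omega$.

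The step I expect to be the main obstacle is the rigorous deployment of the Minty device when $p$ is merely continuous and non-decreasing (not strictly increasing): one cannot directly "divide by $\vr - k$" with $k$ a function of $x$, so the argument must be phrased via the Lebesgue density / exhaustion formulation above, choosing $k$ from a countable dense subset of $[0,\bar\vr)$ so that the inequality $(\overline{p(\vr)} - p(k))(\vr - k) \ge 0$ holds simultaneously for all such $k$ on a full-measure set, and then passing to the limit $k \to \vr(x)$ through that dense set. One must also take care that all the weak limits $\overline{p(\vr)}$, $\overline{\vr p(\vr)}$ exist in the appropriate $\L^\beta$ (guaranteed by the assumed uniform bound $\|p(\vrd)\|_{\L^\beta} \le C$ together with $0 \le \vrd < \bar\vr$, which also bounds $\vrd p(\vrd)$ in $\L^\beta$), and that testing against $\L^\infty$ functions is justified — this is where $\beta > 1$ (so $\L^\beta$ is reflexive and its dual contains $\L^\infty$) is used. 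Everything else is routine.
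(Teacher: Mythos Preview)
Your argument for \eqref{identity} is essentially the paper's Minty-trick proof, carried out with constants $k$ rather than measurable functions $\zeta$; the exhaustion by sets where $\vr$ stays in a compact subinterval of $[0,\bar\vr)$ matches the paper's use of the sets $\Omega_\eta$. This part is fine.

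The proof of \eqref{strictrhob}, however, has a genuine gap. The truncation $F_Z(s)=\min\bigl(1,\frac{2}{\bar\vr-Z}(s-Z)^+\bigr)$ is \emph{neither} convex nor concave on $[0,\bar\vr]$: it has a convex kink at $s=Z$ (flat to increasing) and a concave kink at $s=(Z+\bar\vr)/2$ (increasing to flat). Even if it were concave, the inequality would go the other way: concavity gives $\overline{F_Z(\vr)}\le F_Z(\vr)$ (equivalently, weak \emph{upper} semicontinuity of $\int F_Z$), not the lower bound you need. So from $\int_\Omega F_Z(\vrd)\,\dd x \le C^\beta/p(Z)^\beta$ you cannot conclude $\int_\Omega F_Z(\vr)\,\dd x \le C^\beta/p(Z)^\beta$ by the route you describe.

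Your underlying estimate $\operatorname{meas}\{\vrd\ge Z\}\le C^\beta/p(Z)^\beta$ is correct, and the argument is easily repaired: replace $F_Z$ by the \emph{convex} function $G_Z(s)=(s-Z)^+$. Then $\int_\Omega G_Z(\vrd)\,\dd x \le (\bar\vr-Z)\operatorname{meas}\{\vrd\ge Z\}\le (\bar\vr-Z)C^\beta/p(Z)^\beta$, and convexity gives $\int_\Omega G_Z(\vr)\,\dd x \le \liminf_\delta \int_\Omega G_Z(\vrd)\,\dd x$. Since $G_Z(\vr)\ge(\bar\vr-Z)$ on $\{\vr=\bar\vr\}$, dividing by $(\bar\vr-Z)$ yields $\operatorname{meas}\{\vr=\bar\vr\}\le C^\beta/p(Z)^\beta\to 0$ as $Z\to\bar\vr$. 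The paper takes a different, more direct route: it tests the weak-$*$ convergence of $\vrd$ against $\mathbf{1}_A$ with $A=\{\vr=\bar\vr\}$, uses $0\le\vrd<\bar\vr$ to upgrade to strong $\L^1(A)$ convergence $\vrd\to\bar\vr$, and then Fatou on $\int_A p(\vrd)^\beta$ forces the contradiction. Both fixes are short; the paper's avoids any convexity considerations altogether.
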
	

\begin{proof}
Clearly, $0 \leq \vr \leq \bar{\vr}$.
Let us show the strict inequality in \eqref{strictrhob} by contradiction. To this aim,
we denote by
\begin{equation*}
A\eqldef\{x \in \Omega: \vr(x)= \bar{\vr}\}
\end{equation*}
and we suppose that $|A| >0$.
Taking $ \varphi(x)=1_A$, the indicator function on \(A\), as a test function in the
weak-* convergence of $ \vrd$ to $\vr$. Since $0 \leq \vrd <  \bar{\vr}$,
$\int_A (\vrd-\bar{\vr})\dd x$ tends to \(0\) that is $ \vrd$ converges
strongly to $\bar{\vr}$ in $\L^1(A)$.
We deduce (up to a subsequence of $\delta$) that
\begin{equation*}
\vrd(x) \to \bar{\vr}\quad\text{a.e. in}\quad A,
\end{equation*}
which implies
\begin{equation*}
\int_A(p(\vrd(x)))^\beta \dd x \to +\infty.
\end{equation*}
This contradicts the hypothesis and proves \eqref{strictrhob}.

Let us now establish \eqref{identity}.
For any $\eta > 0$ small enough, we consider the continuous function
$p_\eta \in \C^0([-\frac \eta 2,\bar{\vr} ))$ given by
\begin{equation*}
p_\eta(z)\eqldef
\begin{cases}
p(z) \text{ for } z \in (0,\bar{\vr}),\\
p(0) \text{ for } z \in [-\frac \eta 2,0).
\end{cases}
\end{equation*}
Furthermore, it is convenient to define
\begin{equation*}
\Omega_\eta \eqldef\{ x \in \Omega: 0 \leq \vr(x) \leq  \bar{\vr}-\eta\}.
\end{equation*}
Now for an arbitrary $\zeta$ belonging to
$\L^\infty(\Omega)$ with $-\frac \eta 2 \leq \zeta \leq \bar{\vr}-\frac \eta 2$,
we have immediately that $ (p_\eta(\vrd)-p_\eta(\zeta))(\vrd-\zeta) \geq 0$.
Therefore passing to the limit as $\delta$ tends to $0$ and using the
assumption $\Ov{\vr p(\vr)}=\vr \Ov{p(\vr)}$, we conclude that
$(\Ov{p(\vr)}-p_\eta(\zeta))(\vr-\zeta)\geq 0$ in $\Omega$.
Integrating over $\Omega_\eta$ and taking $\zeta=\vr+\epsilon \psi$
with $\psi\in\L^\infty(\Omega)$ arbitrary
and $\epsilon > 0$ a small parameter and passing to the limit as $\epsilon$ tends to \(0\), we find
\begin{equation*}
\int_{\Omega_\eta } (\Ov{p(\vr)}-p(\vr))\psi \dd x\geq 0.
\end{equation*}
Therefore we obtain $ \Ov{p(\vr)}=p(\vr)$ on $\Omega_\eta$ for any $\eta > 0$ small enough.
Since \eqref{strictrhob} holds, it follows that
$\Omega=\cup_{\eta > 0}\Omega_\eta \cup A$
which concludes the proof.
\end{proof}
	
\subsubsection{Limit in the field equations}
\label{Limit_field}

Revisiting \eqref{E16}, we see that
\begin{equation*}
\delta^{3/2} \| \Grad \vrd \|^2_{\L^2(\Omega)} \leq c.
\end{equation*}
Consequently, in view of \eqref{E25} and compactness of the embedding
\(\W^{1,2}(\Omega) \hookrightarrow\hookrightarrow \L^q(\Omega)\) with \(q\in[1,6)\)
if  \(N=3\) and \(q\geq 1\) arbitrary finite if  \(N=2\),
it is easy to perform the limit $\delta \to 0$ in \eqref{E21} obtaining
\begin{equation}
\label{E26}
\forall\varphi \in \C^1(\bar{\Omega}):\,
\int_{\Omega}\vr \vu \cdot \Grad \varphi\dd x= 0
\text{ which implies that }\Div (\vr \vu) = 0\text{ in }\mathcal{D}'(\Omega).
\end{equation}
In addition, as $\vr \in\L^\infty(\Omega)$ and $\vu \in\W^{1,2}(\Omega)$,
we may apply the regularizing
technique of DiPerna and Lions \cite{DL} to deduce the renormalized
version of \eqref{E26}, namely, we have
\begin{equation}
\label{E27}
\forall\varphi\in\C^1(\bar{\Omega}):\,
\int_{\Omega}\bigl(b(\vr) \vu \cdot \Grad \varphi+(b(\vr)-b'(\vr) \vr)
(\Div\vu)\varphi\bigr)\dd x=0,
\end{equation}
and any continuously differentiable $b$.
In order to perform the limit in \eqref{E22}, we have to control the pressure.
To this end, we multiply \eqref{E22} by $\vc{B}[\vrd]$,
\begin{equation*}
\Div (\vc{B}[\vrd]) = \vrd-\vr_M, \ \vc{B}|_{\partial \Omega} = 0,
\end{equation*}
where $\vc{B}$ is the Bogovskii operator introduced in  \ref{a}.
As $\vc{B}$ vanishes on the boundary $\partial \Omega$, we may integrate by parts obtaining
\begin{equation}
\label{E28}
\begin{aligned}
&\int_{\Omega}
\bigl(p(\vrd) + \sqrt{\delta} \vrd \bigr) \vrd\,\dd x =
\int_{\Omega}\bigl(p (\vrd) + \sqrt{\delta} \vrd\bigr)\vr_M\,\dd x\\&
- \int_{\Omega}\vrd \vud \otimes \vud : \Grad \vc{B}[\vrd]\,\dd x  +
\int_{\Omega}\mathbb{S}(\Grad \vud) : \Grad \vc{B} [\vrd]\,\dd x\\& +
\delta \int_{\Omega} \Grad (\vrd \vud) \cdot \Grad \vc{B}[\vrd]\,\dd x 
+\delta \int_{\Omega}\vrd \vud
\cdot \vc{B}[\vrd]\,\dd x.
\end{aligned}
\end{equation}
The Bogovskii operator $\vc{B} : \L^q(\Omega) \rightarrow \W^{1,q}_0(\Omega)$,
$1 < q < \infty$, is bounded; whence we may use the uniform bounds \eqref{E16} and \eqref{E24}
to find that
\begin{equation*}
\int_{\Omega}p(\vrd) \vrd\,\dd x \leq c + \int_{\Omega}p(\vrd) \vr_M\,\dd x.
\end{equation*}
Now, similarly to the previous part, we choose $\lambda > 1$ such that
\(\lambda \vr_M < \bar{\vr}\).
Accordingly,
\begin{equation*}
\begin{aligned}
\int_{\Omega}p(\vrd) \vrd &\leq c +
\int_{\Omega}p(\vrd) \vr_M\,\dd x \\&\leq c +
\int_{\{ \vr_{\delta} \leq \lambda \vr_M \}}p(\vrd) \vr_M
\,\dd x + \int_{\{\vr_{\delta} > \lambda \vr_M \}}p(\vrd) \vr_M\,\dd x
\\&\leq c + M p (\lambda \vr_M) +
\frac{1}{\lambda} \int_{\Omega}\vrd p(\vrd)\,\dd x;
\end{aligned}
\end{equation*}
whence
\begin{equation}
\label{E29}
\int_{\Omega}\vrd p(\vrd)\,\dd x \leq c.
\end{equation}
According to \eqref{E28} and \eqref{E29}, we get
\begin{equation}
\label{E29b}
\int_{\Omega}p(\vrd)\,\dd x \leq c.
\end{equation}
Now, we repeat the same procedure with the multiplier $\vc{B}[p^\alpha (\vrd)]$,
where $\alpha > 0$ will be fixed below. Similarly to \eqref{E28}, we have
\begin{equation}
\label{E30}
\begin{aligned}
&\int_{\Omega}\bigl(p(\vrd) + \sqrt{\delta} \vrd \bigr)
p^\alpha(\vrd)\,\dd x = \int_{\Omega} \bigl( p (\vrd) + \sqrt{\delta} \vrd \bigr)
\int_{\Omega}p^\alpha (\vrd)\,\dd x\frac1{\abs{\Omega}}\\
&- \int_{\Omega}\vrd \vud \otimes \vud : \Grad \vc{B}[p^\alpha(\vrd)] \,\dd x+
\int_{\Omega}
\mathbb{S}(\Grad \vud) : \Grad \vc{B} [p^\alpha(\vrd)] \,\dd x
\\ &+ \delta \int_{\Omega}\Grad (\vr \vud) \cdot \Grad \vc{B}[p^\alpha(\vrd)]\,\dd x
+\delta \int_{\Omega} \vr \vud
\cdot \vc{B}[p^\alpha(\vrd)]\,\dd x.
\end{aligned}
\end{equation}
By virtue of \eqref{E29b}, we get
\begin{equation*}
\forall \alpha\in (0,1]:\,
\| p^\alpha (\vrd) \|_{\L^{\frac{1}{\alpha}} (\Omega)} \leq c(\alpha),
\end{equation*}
therefore all integrals on the right--hand side of \eqref{E30}
remain bounded uniformly for $\delta$ tending to $0$ for a suitably small $\alpha > 0$.
Accordingly
\begin{equation}
\label{E31}
\| p(\vrd ) \|_{\L^{\alpha + 1}(\Omega)} \leq c\quad\text{for a certain}\quad\alpha > 0,
\end{equation}
and
\begin{equation}
\label{E32}
p(\vrd) \rightharpoonup \Ov{p(\vr)}\quad\text{in}\quad\L^{\alpha + 1}(\Omega)\quad\text{weak}.
\end{equation}
Letting $\delta$ tends to $0$ in \eqref{E22}, we obtain
\begin{equation}
\label{E33}
\int_{\Omega}
\bigl(\vr\vu\otimes\vu : \Grad\varphi + \Ov{p(\vr)} \bigr) \Div\varphi\,\dd x =
\int_{\Omega}\mathbb{S}(\Grad \vu):\Grad \varphi\,\dd x
\end{equation}
for any $\varphi\in\C^1_c(\Omega)$, or, equivalently,
\begin{equation*}
\Div (\vr \vu \otimes \vu) + \Grad \Ov{p(\vr)} = \Div(\mathbb{S}(\Grad \vu))\quad\text{in}
\quad \mathcal{D}'(\Omega).
\end{equation*}

\subsubsection{Compactness of the density (pressure)}
\label{pres}

The existence proof will be complete as soon as we show that
\begin{equation}
\label{E34}
\Ov{p(\vr)} = p(\vr) \quad\text{a.e. in}\quad \Omega.
\end{equation}
To this aim, we employ the nowadays standard method based on the weak continuity
of the effective viscous flux developed by Lions \cite{LI4}.
Our goal consists in showing that
\begin{equation}
\label{E35}
\lim_{\delta \to 0} \int_{\Omega}p(\vrd) \vrd\,\dd x = \int_{\Omega}\Ov{p(\vr)} \vr\,\dd x.
\end{equation}
and applying Lemma \ref{Lemma_prem} to deduce \eqref{E34}.

To prove \eqref{E35}, we first "renormalize" equation \eqref{E21} to get
\begin{equation*}
\Div (b(\vrd) \vud) + (b'(\vrd) \vrd - b(\vrd))  \Div\vud = \delta \Del \vrd b'(\vrd) - \delta \vrd b'(\vrd) +
\delta \vr_M b'(\vrd).
\end{equation*}
Thus, integrating by parts the above expression, we obtain
\begin{equation*}
\int_{\Omega}(b'(\vrd) \vrd - b(\vrd))  \Div \vud\,\dd x = -\delta\int_{\Omega}
b''(\vrd) |\Grad \vrd |^2\,\dd x + \delta \int_{\Omega}
(\vr_M - \vrd) b'(\vrd)\,\dd x.
\end{equation*}
If $b$ is convex, we have
\begin{equation*}
\int_{\Omega}(b'(\vrd) \vrd - b(\vrd))  \Div \vud\,\dd x \leq
\delta M \max_{ \vr \in [0, \bar{\vr}] } (b'(\vr))
- \delta \int_{\Omega}\vrd b'(\vrd)\,\dd x.
\end{equation*}
In particular, for $b(\vr) = \vr \log(\vr)$ we get
\begin{equation*}
\int_{\Omega} \vrd \Div \vud\,\dd x \leq
\delta M (1+\abs{\log (\bar{\vr})}) - \delta \int_{\Omega} \vrd (1 + \log(\vrd))\,\dd x.
\end{equation*}
Finally, letting $\delta \to 0$, we may conclude that
\begin{equation}
\label{E36}
\limsup_{\delta \to 0} \int_{\Omega}\vrd \Div\vud\,\dd x \leq 0.
\end{equation}

The next step consists in multiplying \eqref{E22} by
\(\varphi \Grad \Del^{-1} [\varphi \vrd]\) where $\Del^{-1}$ denotes the inverse of
 the Laplacian on $\Er^3$, specifically a pseudodifferential operator with Fourier's
 symbol $-\frac{1}{|\xi|^2}$ and \(\varphi \in \C^{\infty}_c(\Omega)\).
After a bit tedious but straightforward computation, we obtain
\begin{equation}
\label{E37}
\begin{aligned}
&\intRN{ \varphi^2 p(\vrd) \vrd } - \intRN{\varphi \mathbb{S}(\Grad \vud) : \Grad \Del^{-1}
\Grad [\varphi \vrd] }\\
&+ \intRN{ \varphi (\vrd \vud \otimes \vud ) : \Grad \Del^{-1} \Grad  [\varphi \vrd] }\\&=
- \intRN{ (\vrd \vud \otimes \vud )\cdot \Grad \varphi \cdot \Grad \Del^{-1} [\varphi \vrd] }
\\&+ \intRN{ \mathbb{S}(\Grad \vud) \cdot \Grad \varphi \cdot \Grad \Del^{-1} [\varphi \vrd] }
- \intRN{p (\vrd) \Grad \varphi \cdot \Grad \Del^{-1} [\varphi \vrd] }
\\&- \sqrt{\delta} \intRN{ \vrd
\Grad \varphi \cdot \Grad \Del^{-1} [\varphi \vrd] } - \sqrt{\delta} \intRN{
\varphi^2 \vrd^2 }\\  &
+\delta \intRN{ \Grad (\vr \vu) \cdot \Grad \varphi \cdot \Grad
\Del^{-1} [\varphi \vrd]} + \delta \intRN{ \vrd \vud
\varphi \Grad \Del^{-1} [\varphi \vrd]}\\&
+\delta\intRN{\varphi\nabla(\vr\bfu):\Grad
\Del^{-1}\nabla [\varphi \vrd]} .
\end{aligned}
\end{equation}
Similarly, we consider \(\varphi \Grad \Del^{-1}[\varphi \vr]\)
as a test function in the limit equation \eqref{E33} and using an analogous approach as
we did for \eqref{E37}, we get
\begin{equation}
\label{E38}
\begin{aligned}
&\intRN{ \varphi^2 \Ov{p(\vr)} \vr } -
\intRN{\varphi \mathbb{S}(\Grad \vu) : \Grad \Del^{-1} \Grad [\varphi \vr] }\\
&+ \intRN{ \varphi (\vr \vu \otimes \vu ) : \Grad \Del^{-1} \Grad  [\varphi \vr] }
= - \intRN{ (\vr \vu \otimes \vu )\cdot \Grad \varphi \cdot \Grad \Del^{-1} [\varphi \vr] }
\\&- \intRN{ \mathbb{S}(\Grad \vu) \cdot \Grad \varphi \cdot \Grad \Del^{-1} [\varphi \vr] }
- \intRN{\Ov{p (\vr)} \Grad \varphi \cdot \Grad \Del^{-1} [\varphi \vr] }.
\end{aligned}
\end{equation}
Next observe that
\begin{enumerate}[(i)]

\item all the terms in \eqref{E37} containing \(\delta\) or \(\sqrt{\delta}\)
vanish in the asymptotic limit when $\delta$ tends to $0$,

\item in view of the compactification effect of
$\Grad \Del^{-1} : \L^q(\Er^N) \to \W^{1,q}_{\rm loc}(\Er^N)$, all integrals on the right--hand
side of \eqref{E37} converge to their counterparts in \eqref{E38}. We have shown that
\begin{equation}
\label{E39}
\begin{aligned}
&\intRN{ \varphi^2 p(\vrd) \vrd}
 + \intRN{ \varphi (\vrd \vud \otimes \vud- \mathbb{S}(\Grad \vud)): \Grad
 \Del^{-1} \Grad  [\varphi \vrd]}\\&
\xrightarrow[\delta\rightarrow 0]{}
 \intRN{ \varphi^2 \Ov{p(\vr)} \vr}
+ \intRN{ \varphi (\vr \vu \otimes \vu-\mathbb{S}(\Grad \vu)) : \Grad \Del^{-1} \Grad  [\varphi \vr]}.
\end{aligned}
\end{equation}

\end{enumerate}
Now we have
\begin{equation*}
\intRN{ \varphi (\vrd \vud \otimes \vud ) : \Grad \Del^{-1} \Grad  [\varphi \vrd] } =
\intRN{ \sum_{j=1}^N \varphi u^j_\delta \sum_{i=1}^N
\bigl(\vrd u^i_{\delta} \partial_{x_i} \Del^{-1} [\partial_{x_j} [\varphi \vrd] \bigr)},
\end{equation*}
where
\(\Div (\vrd \vud) = \delta \Del \vrd - \delta \vrd + \delta \vr_M\)
is a precompact subset of \(\W^{-1,2}(\Omega)\).
On the other hand, as ${\bf curl} [ \Grad \Del^{-1} [ \partial_{x_j} (\varphi \vrd) ] $ vanishes,
we may apply the
celebrated Div--Curl lemma of Murat and Tartar
\cite{Mur78CPC,Tar79CCPD} to conclude that
\begin{equation*}
\intRN{ \varphi (\vrd \vud \otimes \vud ) : \Grad \Del^{-1} \Grad  [\varphi \vrd] }
\xrightarrow[\delta\rightarrow 0]{}
\intRN{ \varphi (\vr \vu \otimes \vu ) : \Grad \Del^{-1} \Grad  [\varphi \vr] }.
\end{equation*}
Accordingly, relation \eqref{E39} reduces to
\begin{equation}
\label{E40}
\begin{aligned}
&\intRN{ \varphi^2 p(\vrd) \vrd } - \intRN{\varphi \mathbb{S}(\Grad \vud) :
\Grad \Del^{-1} \Grad [\varphi \vrd]}\\&
\xrightarrow[\delta\rightarrow 0]{}
\intRN{ \varphi^2 \Ov{p(\vr)} \vr} -
\intRN{\varphi \mathbb{S}(\Grad \vu) : \Grad \Del^{-1} \Grad [\varphi \vr]}.
\end{aligned}
\end{equation}
Finally, we check that
\begin{equation*}
\begin{aligned}
&\lim_{\delta \to 0} \intRN{\varphi \mathbb{S}(\Grad \vud) : \Grad \Del^{-1}
\Grad [\varphi \vrd] } - \intRN{\varphi \mathbb{S}(\Grad \vu) : \Grad \Del^{-1} \Grad [\varphi \vr] }\\
 &=
\lim_{\delta \to 0} \intRN{\Grad \Del^{-1} \Grad[\varphi \mathbb{S}(\Grad \vud)] \varphi \vrd} -
 \intRN{\Grad \Del^{-1} \Grad[\varphi \mathbb{S}(\Grad \vu)] \varphi \vr}\\
 &= (\lambda + 2\mu) \intRN{ \varphi^2 \Div (\vud) \vrd } - (\lambda + 2\mu)
 \intRN{ \varphi^2 \Div (\vu) \vr };
\end{aligned}
\end{equation*}
whence, in combination with \eqref{E40}, we obtain
\begin{equation}
\label{E41}
\intRN{ \varphi^2 \bigl(\Ov{ p(\vr) \vr } - \Ov{p(\vr)} \vr \bigr)}
= (\lambda + 2\mu) \intRN{ \varphi^2
\bigl(\Ov{\Div \vu \vr}
- \Div \vu \vr \bigr)}.
\end{equation}
As the limit functions $(\vr,\vu)$ satisfy the renormalized equation of continuity, we have
\begin{equation*}
\intO{\vr \Div \vu} = 0,
\end{equation*}
which, together with \eqref{E36}, yields
\begin{equation*}
\int_{\Omega}\bigl(\Ov{\Div \vu \vr}
- \Div \vu \vr \bigr)\dd x \leq 0.
\end{equation*}
Moreover, since relation \eqref{E41} is satisfies for any
$\varphi \in \C^{\infty}_c(\Omega)$, we easily deduce that
\begin{equation*}
\int_{\Omega} \bigl(\Ov{p(\vr) \vr } - \Ov{p(\vr)} \vr \bigr)\dd x \leq 0.
\end{equation*}
However, as $p$ is non--decreasing, we have
\begin{equation*}
\Ov{ p(\vr) \vr } - \Ov{p(\vr)} \vr \geq 0;
\end{equation*}
whence \(\Ov{ p(\vr) \vr } = \Ov{p(\vr)} \vr\) a.e. in  \(\Omega\).
In particular, we have shown \eqref{E35} and therefore \eqref{E34} holds.

\section{Justification of the Reynolds system}
\label{J}

We consider now the following situation: 
\begin{equation}
\label{J1}
\Omega=Q_\ep\eqldef
\bigl{\{} x = (x_h,z):\, x_h \in \mathcal{T}^D,\,0 < z < \ep h(x_h),\, h \in
\C^{2 + \nu}(\mathcal{T}^D) \bigr{\}},
\end{equation}
where $\mathcal{T}^D$ is the $D$--dimensional torus, $D=1,2$ and \(\ep > 0\).
In other words, the domain is periodic in the "horizontal" variable
$x_h$ and bounded above in the "vertical" variable $z$ by a graph of a smooth function.
We consider a rescaled version of the compressible Navier--Stokes system,
\begin{subequations}
\label{J_23}
\begin{align}
\label{J2}
&\Div(\vr_\ep \vu_\ep)=0\quad\text{in}\quad Q_{\ep},\\
\label{J3}
&\Div(\vr_\ep \vu_\ep \otimes \vu_\ep) + \frac{1}{\ep^2} \Grad p(\vr_\ep) =
\Div (\mathbb{S}(\Grad \vu_\ep))
\quad\text{in}\quad Q_{\ep},
\end{align}
\end{subequations}
supplemented with the boundary conditions
\begin{equation}
\label{J5}
\vu_\ep|_{\partial \Omega} = \bar{\vu}\eqldef
\begin{cases}
(\vc{s},0)&\text{if}\quad z = 0,\\
0&\text{if}\quad z = \ep h(x_h),
\end{cases}
\end{equation}
where \(\vc{s}\) is a constant vector field.
The pressure $p(\vr_\ep)$ is of the same type as in Theorem \ref{ET1}, and
we fix the total mass of the fluid, we get
\begin{equation}
\label{J6}
\intQe{\vr_\ep}=M_\ep > 0 \quad\text{where}\quad
0<\inf_{\ep > 0}\vr_{M}^{\ep}\leq \sup_{\ep > 0}
\vr_{M}^{\ep} < \bar{\vr},
\end{equation}
where \(\vr_{M}^{\ep}\eqldef \frac{M_\ep}{|Q_\ep|}\).
In accordance with Theorem \ref{ET1}, problem \eqref{J_23}--\eqref{J6} admits a
weak solution $(\vre, \vue)$ for any $\ep$ tending to $0$.
Our goal is to study the asymptotic limit as $\ep$ tends to $0$.

\begin{remark} 
\label{ERE1}
As a matter of fact, the geometry of the underlying spatial domain is 
slightly different from that considered in Theorem \ref{ET1}. 
The existence proof, however, may be performed exactly as in Theorem \ref{ET1}.
\end{remark}

\subsection{Uniform bounds}

We choose an extension $\ove$ of the boundary velocity $\bar{\vu}$ such that
\begin{equation}
\label{J7}
\begin{aligned}
&\ove = \bar{\vu}\quad\text{on}\quad\partial \Qme, \quad\Div\ove = 0,\\&
\| \ove \|_{\L^\infty(\Qme)} \leq c,\quad
\| \nabla_{h} \ove \|_{\L^\infty(\Qme)} \leq c,
\quad\| \partial_z \ove \|_{\L^\infty(\Qme)} \leq c \ep^{-1}.
\end{aligned}
\end{equation}
Here and hereafter, the symbols $\nabla_h$, ${\rm div}_h$, $\Delta_h$
denote the differential operators acting on the horizontal variable $x_h \in \mathcal{T}^D$.

\subsubsection{Energy estimates}
\label{enery_estim}

Similarly to the previous section, we use \(\vu_\ep-\bar{\vu}_\ep\) as a test function in
\eqref{J3} obtaining
\begin{equation*}
\begin{aligned}
&\intQe{\Bigl(\vre \vue \otimes \vue : \Grad \vue  + \frac{1}{\ep^2} p (\vre) \Div \vue \Bigr)}
- \intQe{ \vre \vue \otimes \vue : \Grad \ove }
\\
 &=\intQe{ \bigl(\mathbb{S}(\Grad \vue) - \mathbb{S}(\Grad \ove)\bigr): \Grad (\vue - \ove) } +
 \intQe{ \mathbb{S}(\Grad \ove ): \Grad (\vue - \ove) }.
\end{aligned}
\end{equation*}
In addition, as $(\vre, \vue)$ satisfy the renormalized equation of continuity (see \eqref{E27}), we get
\begin{equation*}
\intQe{p (\vre) \Div \vue} = 0,
\end{equation*}
and we have also
\begin{equation*}
\intQe{\vre \vue \otimes \vue : \Grad \vue} = 0,
\end{equation*}
Consequently, we find
\begin{equation}
\label{J8}
\begin{aligned}
&\intQe{ \bigl( \mathbb{S}(\Grad \vue) - \mathbb{S}(\Grad \ove) \bigr): \Grad (\vue - \ove) } \\
& = \intQe{ \vre \vue \otimes \vue : \Grad \ove } - \intQe{ \mathbb{S}(\Grad \ove ): \Grad (\vue - \ove ) }.
\end{aligned}
\end{equation}
Now, we recall the Poincar\' e inequality on $\Qme$,
\begin{equation}
\label{J9}
\| v \|_{\L^2(\Qme)} \leq \ep \| \Grad v \|_{\L^2(\Qme)} \quad\text{whenever}\quad
{v} \in\W^{1,2}(\Qme)\quad\text{and}\quad v|_{z = \ep h(x_h)} = 0,
\end{equation}
and Korn's inequality
\begin{equation}
\label{J10}
\| \Grad \vc{v} \|^2_{\L^2(\Qme)} \leq c \intQe{ \mathbb{S} (\Grad \vc{v} ): \Grad \vc{v} } \quad\text{if}
\quad\vc{v} \in \W^{1,2}_0(\Qme).
\end{equation}
Combining \eqref{J7}--\eqref{J10}, we get
\begin{equation*}
\begin{aligned}
&\frac{1}{\ep^2} \| \vue - \ove \|^2_{\L^2(\Qme)} + \| \Grad \vue - \Grad \ove \|^2_{\L^2(\Qme)} \\ &\leq
c \bigl( \bar{\vr} \| \Grad \ove \|_{\L^\infty(\Qme)} \| \vue \|^2_{\L^2(\Qme)} + \| \Grad \ove \|_{\L^2(\Qme)}
\| \Grad \vue - \Grad \ove \|_{\L^2(\Qme)}  \bigr)\\&
\leq c \bigl( \ep \bar{\vr} \| \Grad \vue \|^2_{\L^2(\Qme)} + \| \Grad \ove \|_{\L^2(\Qme)}
\| \Grad \vue - \Grad \ove \|_{\L^2(\Qme)}  \bigr) \\&\leq c \bigl(\ep \bar{\vr}
\| \Grad \vue - \Grad  \ove \|^2_{\L^2(\Qme)}
+ \ep \bar{\vr} \| \Grad \ove \|_{\L^2(\Qme)}^2 + \| \Grad \ove \|_{\L^2(\Qme)}
\| \Grad \vue - \Grad \ove \|_{\L^2(\Qme)}\bigr).
\end{aligned}
\end{equation*}
Thus, by H\" older's inequality, it follows that
\begin{equation}
\label{J11}
\| \vue - \ove \|^2_{\L^2(\Qme)} + \ep^2 \| \Grad \vue - \Grad \ove
\|^2_{\L^2(\Qme)} \leq c \ep^2 \| \Grad \ove \|^2_{\L^2(\Qme)}.
\end{equation}

\subsubsection{Pressure estimates}

First of all, we estimate the integral mean of the pressure. To this end, we use
the Bogovskii operator $\vc{B}_\ep [\vre]$, specifically,
\begin{equation*}
\Div (\vc{B}_\ep [\vre]) = \vre - \frac{1}{|\Qme|} \intQe{ \vre }
\quad\text{with}\quad \vc{B}_\ep [\vre]=0\quad\text{on}\quad\partial\Qme,
\end{equation*}
as a test function in \eqref{J3}, we obtain
\begin{equation*}
\begin{aligned}
&\intQe{ p(\vre) \vre } = \intQe{ \vr_{M}^{\ep} p(\vre) }
\\&-
\ep^2 \intQe{ \vre \vue \otimes \vue : \Grad \vc{B}_\ep [\vre] }
+ \ep^2 \intQe{ \mathbb{S}(\Grad \vue) : \Grad \vc{B}_\ep [\vre] }.
\end{aligned}
\end{equation*}
Consequently, the H\" older's inequality leads to
\begin{equation}
\label{J12}
\begin{aligned}
&\intQe{ p(\vre) \vre } \leq \intQe{ \vr_{M}^{\ep} p(\vre) } \\&
+ \ep^2 \bigl( \bar{\vr} \| \vue \|^2_{\L^4(\Qme)}
+ \| \Grad \vue \|_{\L^2(\Qme)} \bigr)
 \| \Grad \vc{B}_\ep [\vre] \|_{\L^2(\Qme)}.
\end{aligned}
\end{equation}
On the other hand, we may observe that (see \cite[Lemma 9]{MarPal})
\begin{equation*}
\ep  \| \Grad \vc{B}_\ep [\vre] \|_{\L^2(\Qme)} \leq c \| \vre \|_{\L^2(\Qme)}
\end{equation*}
Then \eqref{J11} and \eqref{J12} give rise to
\begin{equation}
\label{J13}
\begin{aligned}
&\intQe{ p(\vre) \vre } \leq \intQe{ \vr_{M}^{\ep} p(\vre) }\\& + \ep c \bigl( \| \vue \|^2_{\L^4(\Qme)}
+ \| \Grad \ove \|_{\L^2(\Qme)} \bigr)
 \| \vre \|_{\L^2(\Qme)}.
 \end{aligned}
\end{equation}
Similarly to Section \ref{E}, we take
\begin{equation*}
\lambda > 1\quad\text{and}\quad\lambda \vr_{M}^{\ep} < \bar{\vr},
\end{equation*}
and estimate
\begin{equation*}
\begin{aligned}
&\intQe{ \vr_{M}^{\ep} p(\vre) } \leq \int_{\{ \vre < \lambda
\vr_{M}^{\ep} \}}\vr_{M}^{\ep} p(\vre)
\,\dd x + \frac{1}{\lambda} \intQe{ \vre p(\vre) }
\\ &\leq M_\ep p \bigl(\lambda \vr_{M}^{\ep}\bigr)
+ \frac{1}{\lambda} \intQe{ \vre p(\vre) }
\leq c\ep p ( \lambda \vr_{M}^{\ep})+\frac{1}{\lambda} \intQe{ \vre p(\vre) }.
\end{aligned}
\end{equation*}
Finally, we may deduce that \eqref{J13} reduces to
\begin{equation}
\label{J14}
\frac{1}{|\Qme|} \intQe{ p(\vre) \vre } \leq c\bigl(1 +  \bigl( \| \vue \|^2_{\L^4(\Qme)}
+ \| \Grad \ove \|_{\L^2(\Qme)} \bigr)
 \| \vre \|_{\L^2(\Qme)} \bigr).
\end{equation}
On the other hand, again by interpolation, we find
\begin{equation*}
\| v \|_{\L^4(\Qme)} \leq \| v \|^{1/4}_{\L^2(\Qme)} \| v \|_{\L^6(\Qme)}^{3/4}
\leq c \ep^{1/4} \| \Grad v \|_{\L^2(\Qme)}^{1/4} \| \Grad v \|^{3/4}_{\L^2(\Qme)} =
 c \ep^{1/4} \| \Grad v \|_{\L^2(\Qme)},
\end{equation*}
if \(D = 2\) and
\begin{equation*}
\| v \|_{\L^4(\Qme)} \leq \| v \|^{1 - \beta}_{\L^2(\Qme)} \| v \|_{\L^q(\Qme)}^{\beta}
\leq c \ep^{1- \beta} \| \Grad v \|_{\L^2(\Qme) }^{1- \beta}  \| v \|^{\beta}_{\L^q(\Qme)} =
 c \ep^{1 - \beta} \| \Grad v \|_{\L^2(\Qme)}
\end{equation*}
for any \(\beta > \frac{1}{2}\) if \(D = 1\),
whenever $v \in \W^{1,2}(\Qme)$ and $v|_{z = \ep h(x_h)} = 0$.
Indeed $v$ can be extended to be zero for $\ep h(x_h) \leq z < 1$ and, consequently,
we may use the standard Sobolev embedding on the strip
$\mathcal{T}^N \times (0,1)$, namely $\W^{1,2} \hookrightarrow \L^6$
if $N=3$, $\W^{1,2} \hookrightarrow \L^\beta$ for any
$\beta$ finite if $N=2$. In both cases, relation \eqref{J14} together with the energy
bound \eqref{J11}  give rise to
\begin{equation}
\label{J15}
\begin{aligned}
&\frac{1}{|\Qme|} \intQe{ p(\vre) \vre } \leq c\bigl(1 +  \bigl( \| \vue \|^2_{\L^4(\Qme)}
+ \| \Grad \ove \|_{\L^2(\Qme)} \bigr)
 \| \vre \|_{\L^2(\Qme)} \bigr) \\
 &\leq c\bigl( 1 +  \bigr( \ep^{1/2} \| \Grad \vue \|^2_{\L^2(\Qme)}
+ \| \Grad \ove \|_{\L^2(\Qme)} \bigr)
 \| \vre \|_{\L^2(\Qme)} \bigr)
\\ &\leq c\bigl(1 +  \bigl( \ep^{1/2} \| \Grad \ove \|^2_{\L^2(\Qme)}
+ \| \Grad \ove \|_{\L^2(\Qme)} \bigr)
 \| \vre \|_{\L^2(\Qme)} \bigr) \\ &\leq c \bigl(1 + \| \Grad \ove \|_{\L^2(\Qme)}
 \| \vre \|_{\L^2(\Qme)} \bigr)
 \leq \bar{\vr} c.
\end{aligned}
\end{equation}
With \eqref{J15} at hand, we may estimate the pressure $p(\vre)$ in the $\L^2$--norm.
Indeed, repeating the above procedure with $\vc{B}[p(\vre)]$,
\begin{equation*}
\Div \vc{B}_\ep [p(\vre)] = p(\vre) - \frac{1}{|\Qme|} \intQe{ p(\vre) },
\end{equation*}
we get
\begin{equation*}
\begin{aligned}
&\intQe{ |p(\vre)|^2 } = \frac1{\abs{Q_\ep}}\Bigl( \intQe{ p(\vre) } \Bigr)^2
- \ep^2 \intQe{ \vre \vue \otimes \vue : \Grad \vc{B}_\ep [p(\vre)] }\\&
+ \ep^2 \intQe{ \mathbb{S}(\Grad \vue) : \Grad \vc{B}_\ep [p(\vre)] }.
\end{aligned}
\end{equation*}
Thus, exactly as in \eqref{J12}--\eqref{J13}, we have
\begin{equation*}
\intQe{ |p(\vre)|^2 } \leq
\frac1{\abs{Q_\ep}}\Bigl( \intQe{ p(\vre) } \Bigr)^2
+ c \ep \| \Grad \ove \|_{\L^2(\Qme)} \| p(\vre) \|_{\L^2(\Qme)};
\end{equation*}
which implies by using \eqref{J15} that
\begin{equation}
\label{J16}
\begin{aligned}
&\| p(\vre) \|^2_{\L^2(\Qme)} \leq
\frac1{\abs{Q_\ep}}
\Bigl( \intQe{ p(\vre) } \Bigr)^2
+ \ep^2 \| \Grad \ove \|_{\L^2(\Qme)}^2\\&
\leq c\ep \bigl(1 + \ep\| \Grad \ove \|_{\L^2(\Qme)}^2 \bigr).
\end{aligned}
\end{equation}

\subsection{Limit $\ep \to 0$}

In order to perform the limit as $\ep$ tends to $0$,
it is convenient to work on a fixed spatial domain.
Introducing a new vertical variable $Z = \frac{1}{\ep} z$, we get
\begin{equation}
\label{J17}
Q \eqldef \bigl{\{} x = (x_h,Z):\; x_h \in \mathcal{T}^D ,
\ 0 < z < h(x_h),\ h \in \C^{2 + \nu}(\mathcal{T}^1) \bigr{\}}.
\end{equation}
For $\vue = (\vu_{h,\ve}, V_\ve)$, the system \eqref{J_23} written in the new coordinates reads:
\begin{subequations}
\begin{align}
\label{J18}
&{\rm div}_h (\vr \vu_{h,\ve} ) + \frac{1}{\ep} \partial_Z (\vr {V_\ve}) = 0,\\
\label{J19}
&{\rm div}_h (\vr \vu_{h,\ve} \otimes \vu_{h,\ve} ) + \frac{1}{\ep}
\partial_Z (\vr {V_\ve}  \vu_{h,\ve}) + \frac{1}{\ep^2} \nabla_h p(\vr)
\\ &\notag=
\mu \Bigl(\Delta_h \vu_{h,\ve} + \frac{1}{\ep^2} \partial^2_{Z} \vu_{h,\ve} \Bigr)
+ (\lambda + \mu) \nabla_h {\rm div}_h \vu_{h,\ve} + (\lambda + \mu) \frac{1}{\ep}
\nabla_h \partial_Z {V_\ve},\\
\label{J20}
&{\rm div}_h (\vr \vu_{h,\ve} {V_\ve} ) + \frac{1}{\ep} \partial_Z (\vr V_\ve^2)
+ \frac{1}{\ep^3} \partial_Z p(\vr)
\\ &\notag
= \mu \Delta_h V_{\ve} + \frac{1}{\ep^2} \mu \partial^2_Z {V_\ve}
+ \frac{1}{\ep} \partial_Z {\rm div}_h \vu_{h,\ve} +
(\lambda + \mu) \frac{1}{\ep^2} \partial^2_Z {V_\ve}.
\end{align}
\end{subequations}
Thus the uniform bounds \eqref{J11} yield
\begin{subequations}
\label{J21}
\begin{align}
&\ueh \rightharpoonup \uh \quad\text{in}\quad\L^2(Q)\quad\text{weak},
\quad \Ve \rightharpoonup V \quad\text{in}\quad\L^2(Q)\quad\text{weak} ,\\
&\partial_Z \ueh \rightharpoonup \partial_Z \uh \quad\text{in}\quad\L^2(Q)\quad\text{weak},
\\&
\partial_Z \Ve \rightharpoonup \partial_Z V \quad\text{in}\quad\L^2(Q)\quad\text{weak},\\
\ep &\bigl(\| \nabla_h \vue\|_{\L^2(Q)} + \| \nabla_h \Ve \|_{\L^2(Q)}\bigr) \leq c.
\end{align}
\end{subequations}
On the other hand, it follows from \eqref{J6} that
\begin{equation*}
\begin{aligned}
&V=0\quad\text{for}\quad z=0\quad\text{or}\quad z=h,\\&
\vu_h=s\quad\text{for}\quad z=0\quad\text{and}\quad \vu_h=0\quad\text{for}\quad z=h.
\end{aligned}
\end{equation*}
In addition, we have (see \eqref{a4}),
\begin{equation}
\label{J21a}
\sqrt{\ep} \bigl(\| \ueh \|_{\L^4(Q)} + \| \V_{h,\ve}\|_{\L^4(Q)} \bigr) \leq c,
\end{equation}
and by interpolation, we may deduce that
\begin{equation}
\label{J22bis}
\varepsilon^{1/6}\bigl(
\norm[\L^{12/5}(Q)]{\vu_{h,\varepsilon}}+\norm[\L^{12/5}(Q)]{V_{h,\varepsilon}}\bigl)
\leq c.
\end{equation}
Moreover, by virtue of \eqref{J16},
\begin{equation}
\label{J22}
p(\vre) \rightharpoonup \Ov{p(\vr)} \quad\text{in}\quad\L^2(Q)\quad\text{weak},
\end{equation}
and, finally,
\begin{equation}
\label{J23}
\vre \rightharpoonup \vr \quad\text{in}\quad\L^\infty(Q)\quad\text{weak-*}
\quad\text{and}\quad 0 \leq \vr \leq \bar{\vr}.
\end{equation}
Observe that all limits exhibited in \eqref{J21}, \eqref{J22} and \eqref{J23}
hold up to a suitable subsequence.

\subsubsection{Limit in the field equations}

As $(\vre, \ueh, \Ve)$ satisfy \eqref{J19} and \eqref{J20}
in $\mathcal{D}'(Q)$, we easily deduce from \eqref{J21}--\eqref{J23}) that
\begin{equation}
\label{J24}
\forall\varphi \in \C_c^{\infty}(Q):\,
\intQ{\bigl(- \mu \partial_Z \vu_h \cdot \partial_Z \varphi + \Ov{p(\vr)} \partial_y \varphi \bigr)} = 0.
\end{equation}
On the other hand, we have
\begin{equation}
\label{J25}
\forall\varphi \in \DC(Q):\, \intQ{ \Ov{p(\vr)} \partial_Z \varphi } = 0,
\end{equation}
meaning that
\(\Ov{p(\vr)} = \Ov{p(\vr)}(x_h)\) is independent of the vertical coordinate \(Z\).
Finally, we take $\varphi = \varphi (x_h)$ as a test function in \eqref{J18}, we find
\begin{equation*}
\forall\varphi \in \C_c^{\infty}(\mathcal{T}^D):\,
\int_{\mathcal{T}^D} \Bigl(\int_0^{h(x_h)} \Ov{\vr \vu_h } {\rm d}Z \Bigr)
\cdot \nabla_h \varphi \, {\rm d}x_h = 0.
\end{equation*}

\subsubsection{Strong convergence of the pressure}

Our ultimate task is to show strong convergence of the pressure.
First, we find from (\ref{J20})
\begin{equation*}
\begin{aligned}
&\partial_Z p(\vre) =
 - \ep^3 {\rm div}_h (\vre \ueh {V}) - \ep^2 \partial_Z (\vre \Ve^2)
 \\ &+ \mu \ep^3 \Delta_h \Ve + \ep (\lambda + \mu) \partial^2_Z \Ve
+ (\lambda+\mu)\ep^2 \partial_Z {\rm div}_h (\ueh).
\end{aligned}
\end{equation*}
Now we claim that, thanks to the bounds established in \eqref{J21} and \eqref{J22}, we have
\begin{equation}
\label{J26}
\| \partial_z p(\vre) \|_{\W^{-1,2}(Q)} \leq c \ep.
\end{equation}
Indeed the most difficult term reads,
\begin{equation*}
\| \ep^2 \partial_Z (\vre \Ve^2 ) \|_{\W^{-1,2}(Q)} \leq
\| \ep^2 \vre \Ve^2 \|_{\L^2(Q)} \leq \bar{\vr} \ep \| \sqrt{\ep} \Ve \|_{\L^4(Q)}
^2\leq c \ep
\end{equation*}
where we have used \eqref{J21a}. On the other hand,
we employ the method used in Section \ref{pres}, specifically, we consider
\begin{equation*}
\forall\varphi\in\DC(Q):\,
\varphi \Grad \Del^{-1} [\varphi p(\vre)] = \varphi \left[ \nabla_h \Del^{-1} [\varphi p(\vre)] ,
\partial_Z  \Del^{-1} [\varphi p(\vre)] \right],
\end{equation*}
as a test function in \eqref{J19} and \eqref{J20}. Similarly to Section \ref{pres}, we obtain
{\small{
\begin{equation*}
%\label{J27}
\begin{aligned}
&
\intRNN{ \varphi^2 p(\vre)^2 } = - \intRNN{p (\vre) \Grad \varphi \cdot \Grad \Del^{-1} [\varphi p(\vre)] }
\\&
- \ep^2 \intRNN{ \varphi (\vre \ueh \otimes \ueh)  \nabla_h \Del^{-1} \nabla_h  [\varphi p(\vre)] }
%\\&%%%%
- \ep^2\intRNN{ (\vre \ueh^2 )\cdot \nabla_h \varphi \cdot \nabla_h \Del^{-1} [\varphi p(\vre)] }
\\&
- \ep^3 \intRNN{ \varphi (\vre \ueh \Ve)  \nabla_h \Del^{-1} \partial_Z  [\varphi p(\vre)] }
%\\&%%%%
- \ep^3
\intRNN{ (\vre \ueh \Ve )\cdot \nabla_h \varphi \cdot \partial_Z \Del^{-1} [\varphi p(\vre)] }\\&
- \ep \intRNN{ \varphi (\vre \ueh \Ve)  \partial_Z \Del^{-1} \nabla_h  [\varphi p(\vre)] }
%\\&%%%%
- \ep\intRNN{ (\vre \ueh \Ve )\cdot \partial_Z \varphi \cdot \nabla_h \Del^{-1} [\varphi p(\vre)] }
\\&
- \ep^2 \intRNN{ \varphi (\vre \Ve^2)  \partial_Z \Del^{-1} \partial_Z  [\varphi p(\vre)] }
%\\&%%%%
- \ep^2\intRNN{ (\vre \Ve^2 )\cdot \partial_Z \varphi \cdot \partial_Z \Del^{-1} [\varphi p(\vre)] }
\\&
+ \ep^2 \intRNN{
%(\lambda + \mu)
\beta_1
\varphi {\rm div}_h \ueh {\rm div}_h \Del^{-1}
\nabla_h  [\varphi p(\vre)]}
%\\&%%%%
+ \ep^2 \intRNN{
%(\lambda + \mu)
\beta_1
{\rm div}_h \ueh \nabla_h \varphi \cdot \Del^{-1} \nabla_h
[\varphi p(\vre)]}
\\&
+ \ep \intRNN{
%(\lambda {+} \mu)
\beta_1
\varphi \partial_Z \Ve {\rm div}_h \Del^{-1}
\nabla_h  [\varphi p(\vre)]}
%\\&%%%%
+ \ep \intRNN{
%(\lambda + \mu)
\beta_1
\partial_Z \Ve \nabla_h \varphi \cdot \Del^{-1}
 \nabla_h  [\varphi p(\vre)]}
 \\&
 + \ep^2 \intRNN{
 %(\lambda {+} \mu)
 \beta_1
 \varphi \partial_Z \ueh \cdot \nabla_h  \Del^{-1}
 \partial_Z  [\varphi p(\vre)]}
% \\&%%%%
 +
\ep^2 \intRNN{
%(\lambda {+} \mu)
\beta_1
\nabla_h \varphi \cdot \partial_Z \ueh \Del^{-1}
\partial_Z  [\varphi p(\vre)]}
\\&
+ \ep \intRNN{
%(\lambda {+} 2 \mu)
\beta_2
\varphi \partial_Z \Ve \partial_Z  \Del^{-1}
\partial_Z  [\varphi p(\vre)]}
%\\&%%%%
+\ep \intRNN{
%(\lambda {+} 2\mu)
\beta_2
\partial_Z \varphi \partial_Z \Ve \Del^{-1}
 \partial_Z  [\varphi p(\vre)]}\\&
 + \ep^2 \intRNN{ \mu \varphi \nabla_h \ueh \cdot \nabla_h \Del^{-1} \nabla_h [\varphi p(\vre)]}
 %\\&%%%%
 +
\ep^2 \intRNN{ \mu \nabla_h \varphi \nabla_h \ueh \cdot \Del^{-1} \nabla_h [\varphi p(\vre)]}
\\&
+ \ep^3 \intRNN{ \mu \varphi \nabla_h \Ve \nabla_h \cdot \Del^{-1} \partial_Z  [\varphi p(\vre)]}
%\\&%%%%
+
\ep^3 \intRNN{ \mu \nabla_h \varphi \cdot \nabla_h \Ve \Del^{-1} \partial_Z  [\varphi p(\vre)]}
\\&
+\intRNN{ \mu \varphi \partial_Z \ueh \partial_Z \Del^{-1} \nabla_h  [\varphi p(\vre)]}
%\\&%%%%
+\intRNN{ \mu \partial_Z \varphi \partial_Z \ueh \cdot \Del^{-1} \nabla_h  [\varphi p(\vre)]},
\end{aligned}
\end{equation*}
}}
where \(\beta_1\eqldef \lambda+\mu\) and \(\beta_2\eqldef \lambda+2\mu\).
Our goal is to let $\ep$ tends to $0$ in the above identity.
%\eqref{J27}.
According to the following Sobolev embedding:
\(\W^{1,2} \hookrightarrow \L^6\) if \(D = 2\) and
\(\W^{1,2} \hookrightarrow \L^q\) for any finite \(q\) if  \(D=1\), clearly
all terms in the form $\Grad \Del^{-1} [\varphi p(\vre)]$ are uniformly bounded
al least in $\L^6(Q)$. Consequently, it is easy to check
that all integrals in the above inequality
%\eqref{J27}
containing these terms and multiplied by some power of
$\ep$ vanish in the limit as $\ep$ tends to $0$ (we also used her \eqref{J22bis}).
Next, we have to check that also the remaining integrals multiplied by some power of
$\ep$ vanish in the asymptotic limit. We focus on the most difficult term,
\begin{equation*}
\begin{aligned}
\ep &\intRNN{\varphi \vre (\ueh \Ve) \cdot \partial_Z \Del^{-1} \nabla_h [ \varphi p(\vre) ]}
\\&= \ep \intRNN{ \vre (\ueh \Ve) \nabla_h \Del^{-1} \partial_Z [ \varphi p(\vre) ]}\\
&= \ep \intRNN{ \vre (\ueh \Ve) \cdot \nabla_h \Del^{-1} [\partial_Z  \varphi p(\vre) ]}
\\&+ \ep \intRNN{ \vre (\ueh \Ve) \cdot \nabla_h \Del^{-1} [ \varphi \partial_Z p(\vre) ]}.
\end{aligned}
\end{equation*}
By virtue of \eqref{J21}, we have
\begin{equation*}
\ep \intRNN{ \vre (\ueh \Ve) \cdot \nabla_h \Del^{-1} [\partial_Z  \varphi p(\vre) ]} \to 0,
\end{equation*}
while, in accordance with \eqref{J21a} and \eqref{J26}, we find
\begin{equation*}
\begin{aligned}
&\Bigl{|} \ep \intRNN{ \vre (\ueh \Ve) \cdot  \nabla_h \Del^{-1} [ \varphi \partial_Z p(\vre) ]}
\Bigr{|} \leq \left\| \ep \vre \ueh \Ve \right\|_{\L^2(Q)}
\| \varphi \partial_Z p(\vre)\|_{\W^{-1,2}(\Er^N)}\\
&\leq c \left\| \sqrt{\ep}  \ueh  \right\|_{\L^4(Q)}\left\| \sqrt{\ep}
\Ve  \right\|_{\L^4(Q)} \left\| \varphi \partial_Z p(\vre) \right\|_{\W^{-1,2}(\Er^N)} \leq c \ep.
\end{aligned}
\end{equation*}
for any $\alpha < 1$.
Consequently, we may let $\ep$ tends to $0$, %in \eqref{J27},
we get
\begin{equation}
\label{J28}
\begin{aligned}
&
\lim_{\ep \to 0} \intRNN{ \varphi^2 p(\vre)^2 }
= - \intRNN{ \Ov{p (\vr)} \Grad \varphi \cdot \Grad \Del^{-1} [\varphi \Ov{p(\vr)}] }
\\
& +
\intRNN{ \mu \varphi \partial_Z \vu_h \cdot \nabla_h \Del^{-1} [\partial_Z \varphi \Ov{p(\vr)}]}
\\&
+
\intRNN{ \mu \partial_Z \varphi \partial_Z \vu_h \cdot \Del^{-1} \nabla_h  [\varphi \Ov{p(\vr)}]}.
\end{aligned}
\end{equation}
Finally, using
\begin{equation*}
\varphi \Grad \Del^{-1} [\varphi \Ov{p(\vr)}]
\end{equation*}
as a \emph{test function} in
\eqref{J24} and \eqref{J25}, then comparing the resulting expression with
\eqref{J28}, we may conclude that
\begin{equation*}
\varphi \in \DC (Q):\,
\lim_{\ep \to 0} \intRNN{ \varphi^2 p(\vre)^2 }
= \intRNN{ \varphi^2 \Ov{p(\vr)}^2 },
\end{equation*}
which immediately yields
\begin{equation*}
p(\vre) \to \Ov{p(\vr)} \quad\text{in}\quad\L^2_{\rm loc}(Q).
\end{equation*}
Under the supplementary hypothesis that \(p'>0\), we get the desired conclusion
\begin{equation}
\label{J29}
\Ov{p(\vr)} = p(\vr).
\end{equation}

\begin{theorem}
\label{ET2}
Let $Q \subset \Er^N$, $N=2,3$, defined as follows:
\begin{equation*}
Q\eqldef \bigl{\{} x = (x_h,z):\;\ x_h \in \mathcal{T}^D ,
\ 0 < z < h(x_h),\ h \in \C^{2 + \nu}(\mathcal{T}^D) \bigr{\}},
\end{equation*}
with \(D =1,2\). Let the pressure be given as $p = p(\vr)$, where
\begin{equation*}
p \in \C^0[0, \bar{\vr}) \cap \C^1(0, \bar{\vr}), \quad p(0) \geq 0,\quad
p'(\vr) > 0 \quad\text{for}\quad\vr > 0, \quad\lim_{\vr \to \bar{\vr}} p(\vr) = +\infty.
\end{equation*}
Let
\begin{equation*}
\frac{M_\ep}{\ep} > 0\quad\text{such that}\quad 0 < \inf_{\ep > 0}
\frac{M_\ep }{|Q|\ep} \leq \sup_{\ep>0} \frac{M_\ep }{|Q|\ep} < \bar{\vr}.
\end{equation*}
Let $(\vre, \ueh, \Ve)$ be a family of weak solutions to the rescaled problem
\eqref{J18}--\eqref{J20}, where the velocities satisfies the boundary conditions \eqref{J5}
and \(\int_{Q}\vr_{\ep}\,\dd x=\frac{M_\ep}{\varepsilon}\)
Then, up to a subsequence, we have
\begin{subequations}
\begin{align}
&\frac{M_{\ep}}{\ep}\to M,\\&
0 \leq \vre \leq \bar{\vr}, \quad\vre \to \vr \quad\text{in}\quad\L^1(Q),\\&
p(\vre) \to p(\vr) \quad\text{in}\quad\L^2_{\rm loc}(Q),\quad
p(\vre) \rightharpoonup p(\vr) \quad\text{in}\quad\L^2(Q)\quad\text{weak},
\\&\ueh \rightharpoonup \vc{u}_h,\quad
\partial_Z \ueh \rightharpoonup \partial_Z \vc{u}_h
\quad\text{in}\quad
\L^2(Q)\quad\text{weak},\\&
\Ve \rightharpoonup V,\quad
\partial_Z V_\ep \rightharpoonup
\partial_Z V \quad\text{in}\quad \L^2(Q)\quad\text{weak},
\end{align}
\end{subequations}
where the limit satisfies
\begin{subequations}
\begin{align}
\label{L1}
&\vr = \vr(x_h),\quad
0 \leq \vr \leq \bar{\vr},\quad p= p(\vr) \in \L^2(\mathcal{T}^D),
\\&
\intQ {\vr} = \int_{\mathcal{T}^D} h \vr \dd x_h = M ,
\\ &\vu_h |_{Z = 0} = \vc{s},\quad \vu_h|_{Z = h(x_h)} = 0,\\&
\label{L2}
{\rm div}_h \Bigl( \int_0^{h(x_h)} \vr \vu_h \ {\rm d}Z \Bigr) = 0,\quad
- \mu \partial^2_Z \vu_h + \nabla_h p(\vr) = 0
\end{align}
\end{subequations}
in $\mathcal{D}'(\Omega)$.
\end{theorem}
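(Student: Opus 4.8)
The plan is to assemble the uniform-in-$\ep$ bounds and limit passages carried out in the preceding subsections; Theorem~\ref{ET2} is essentially their synthesis. First I would recall that, by Theorem~\ref{ET1} together with Remark~\ref{ERE1}, for every $\ep>0$ the rescaled system \eqref{J_23} with the boundary data \eqref{J5} and the mass constraint \eqref{J6} possesses a weak solution $(\vre,\vue)$ with $0\le\vre<\bar{\vr}$ a.e.\ in $\Qme$, satisfying in addition the renormalized equation of continuity \eqref{E27}. Changing the vertical variable to $Z=z/\ep$ transports the problem onto the fixed domain $Q$ and yields the system \eqref{J18}--\eqref{J20}. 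The remaining work is in three parts: $\ep$-uniform bounds and extraction of subsequences; passage to the limit in the field equations; and strong convergence of the pressure, which is the crux.

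For the a priori bounds I would test \eqref{J3} with $\vue-\ove$; since $(\vre,\vue)$ renormalizes continuity, $\intQe{p(\vre)\,\Div\vue}=0$ and $\intQe{\vre\vue\otimes\vue:\Grad\vue}=0$, so the Poincar\'e inequality \eqref{J9} and Korn's inequality \eqref{J10} give the energy estimate \eqref{J11}. Using next the Bogovskii operator $\vc{B}_\ep$ applied to $\vre-\frac1{|\Qme|}\intQe{\vre}$ and then to $p(\vre)-\frac1{|\Qme|}\intQe{p(\vre)}$ as test functions in \eqref{J3}, and absorbing $\intQe{\vr_{M}^{\ep} p(\vre)}$ by splitting the domain at $\{\vre=\lambda\vr_M^\ep\}$ for a fixed $\lambda>1$ with $\lambda\vr_M^\ep<\bar{\vr}$, I obtain the pressure estimates \eqref{J15} and \eqref{J16}. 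After rescaling, these become $\ep$-uniform bounds on $\ueh,\Ve,\partial_Z\ueh,\partial_Z\Ve$ and $p(\vre)$ in $\L^2(Q)$, on $\ep\nabla_h\ueh,\ep\nabla_h\Ve$, and on $\sqrt{\ep}\,\ueh,\sqrt{\ep}\,\Ve$ in $\L^4(Q)$, giving the convergences \eqref{J21}, \eqref{J21a}, \eqref{J22}, \eqref{J23} along a subsequence. Since by \eqref{J6} the quotient $M_\ep/(|Q|\ep)$ stays in a fixed compact subinterval of $(0,\bar{\vr})$, a further subsequence gives $M_\ep/\ep\to M$ with $M>0$; and since the trace on $Z=0,h(x_h)$ is (weakly) continuous on the anisotropic space of functions with $\partial_Z$-derivative in $\L^2(Q)$, while $\ueh,\Ve$ carry the $\ep$-independent boundary values \eqref{J5}, the limit satisfies $\vu_h|_{Z=0}=\vc{s}$, $\vu_h|_{Z=h}=0$, $V|_{Z=0}=V|_{Z=h}=0$.

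Next I would pass to the limit in the field equations. Testing \eqref{J19} and \eqref{J20} against $\varphi\in\DC(Q)$ and using \eqref{J21}, \eqref{J21a}, \eqref{J22bis}, every term carrying a positive power of $\ep$ vanishes, leaving \eqref{J24} and \eqref{J25}; the latter forces $\Ov{p(\vr)}$ to be independent of $Z$, and the former is the weak form of $-\mu\partial_Z^2\vu_h+\nabla_h\Ov{p(\vr)}=0$ in $\mathcal{D}'(Q)$. Testing \eqref{J18} with $\varphi=\varphi(x_h)$ and letting $\ep\to0$ gives the depth-averaged relation ${\rm div}_h\bigl(\int_0^{h(x_h)}\Ov{\vr\vu_h}\,{\rm d}Z\bigr)=0$.

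The main obstacle is the strong convergence of the pressure, i.e.\ identifying $\Ov{p(\vr)}=p(\vr)$, which is what upgrades the assertions above to the stated ones. I would run the effective-viscous-flux argument of Lions: use $\varphi\Grad\Del^{-1}[\varphi p(\vre)]$ as a test function in \eqref{J19}--\eqref{J20} and $\varphi\Grad\Del^{-1}[\varphi\Ov{p(\vr)}]$ in the limit equations \eqref{J24}--\eqref{J25}, and subtract. The delicate point is controlling the many $\ep$-weighted remainder terms; the key input is \eqref{J26}, $\|\partial_zp(\vre)\|_{\W^{-1,2}(Q)}\le c\ep$, derived from \eqref{J20} with the help of the anisotropic $\L^4$-bound \eqref{J21a} (the worst term, $\ep^2\partial_Z(\vre\Ve^2)$, being controlled via $\|\sqrt{\ep}\,\Ve\|_{\L^4(Q)}^2\le c$). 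With it, every such term — notably $\ep\intRNN{\varphi\vre(\ueh\Ve)\cdot\partial_Z\Del^{-1}\nabla_h[\varphi p(\vre)]}$ — vanishes in the limit, yielding \eqref{J28}; comparing \eqref{J28} with the limit equations tested against the same multiplier gives $\lim_{\ep\to0}\intRNN{\varphi^2p(\vre)^2}=\intRNN{\varphi^2\Ov{p(\vr)}^2}$ for all $\varphi\in\DC(Q)$, hence $p(\vre)\to\Ov{p(\vr)}$ in $\L^2_{\rm loc}(Q)$. Since $p'>0$, $p$ is a homeomorphism of $[0,\bar{\vr})$ onto its range, so along a subsequence $\vre\to p^{-1}(\Ov{p(\vr)})$ a.e.\ in $Q$; as $0\le\vre\le\bar{\vr}$ and $|Q|<\infty$, dominated convergence gives $\vre\to\vr$ in $\L^1(Q)$, the identity \eqref{J29} $\Ov{p(\vr)}=p(\vr)$, and — $\Ov{p(\vr)}$ being $Z$-independent — $\vr=\vr(x_h)$, whence $p(\vr)\in\L^2(\mathcal{T}^D)$ and $-\mu\partial_Z^2\vu_h+\nabla_h p(\vr)=0$. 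Finally, strong convergence of $\vre$ replaces $\Ov{\vr\vu_h}$ by $\vr\vu_h$ in the averaged continuity equation, and $\intQ{\vr}=\int_{\mathcal{T}^D}h\vr\dd x_h=\lim_{\ep}M_\ep/\ep=M$, which completes the proof.
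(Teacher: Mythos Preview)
Your proposal is correct and follows essentially the same route as the paper: the energy bound \eqref{J11} via testing with $\vue-\ove$, the pressure bounds \eqref{J15}--\eqref{J16} via the Bogovskii operator, the extraction of subsequences \eqref{J21}--\eqref{J23}, the limit in the field equations yielding \eqref{J24}--\eqref{J25}, and the effective--viscous--flux argument with the multiplier $\varphi\Grad\Del^{-1}[\varphi p(\vre)]$ together with the key estimate \eqref{J26} to get $p(\vre)\to\Ov{p(\vr)}$ in $\L^2_{\rm loc}(Q)$. Your final paragraph, where you spell out that $p'>0$ makes $p$ a homeomorphism so that $\vre\to\vr$ a.e.\ and in $\L^1(Q)$, and hence $\Ov{\vr\vu_h}=\vr\vu_h$ in the depth--averaged continuity equation, makes explicit a step the paper leaves implicit between \eqref{J29} and the statement of the theorem.
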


\subsection{Uniqueness for the limit problem}
\label{U}

It is interesting to know if the convergence stated in Theorem \ref{ET2}
is unconditional, meaning if the limit problem
\eqref{L2} is uniquely solvable in the class \eqref{L1}.
We show that this is indeed the case at least for $D=1$,
$p'(\vr) > 0$ with  $\vr\in(0, \bar{\vr})$.
We denote $v = \vu_h$ and $x_h = y$ and we assume that \(s>0\).

To this aim,  we first observe that the second equation in \eqref{L2},
and the boundary conditions satisfied $v$ at $Z = 0, h(y)$ lead to
\begin{equation}
\label{U1}
v = \frac{1}{2 \mu} \partial_y p(\vr) (Z^2 - Zh) + s \Bigl(1 - \frac{Z}{h} \Bigr),
\end{equation}
Integrating over \(Z\), we find
\begin{equation*}
\int_0^h v\dd Z=-\frac{h^3}{12\mu}\partial_y p(\vr)+\frac{sh}2,
\end{equation*}
from which we deduce $\partial_y p(\vr) \in \L^2(\mathcal{T}^1)$.
In particular $p(\vr)$ is H\" older continuous and periodic in $y$. It follows that
\begin{equation*}
\sup_{y \in \mathcal{T}^1} \vr(y) < \bar{\vr}.
\end{equation*}
According to \eqref{L2}, we obtain the following ODE
\begin{equation}
\label{U2}
\partial_y \Bigl( \frac{h^3}{12 \mu} \vr \partial_y p(\vr) \Bigr)
= \partial_y \Bigl(\frac{\vr h s}{2}\Bigr) \quad\text{in}\quad\mathcal{D}'(\mathcal{T}^1)
\quad\text{and}\quad
 \int_{\mathcal{T}^1} h \vr
\ {\rm d}y = M.
\end{equation}
We show that problem \eqref{U2} admits at most one solution. First, we have
\begin{equation}
\label{U3}
\frac{h^3}{12 \mu} \vr \partial_y p(\vr) = \frac{\vr h s}2  + \lambda\quad\text{with}\quad \lambda \in \Er,
\end{equation}
which implies that
\begin{equation}
\label{U33}
\frac{1}{12 \mu} \vr \partial_y p(\vr) = \frac{\vr  s}{2 h^2}
+ \frac{\lambda}{h^3}\quad\text{with}\quad \lambda \in \Er.
\end{equation}
By using the following identity:
\begin{equation*}
\vr p'(\vr)\partial_y\vr=\partial_y f(\vr)\quad\text{with}\quad f(\vr)\eqldef\int_1^\vr s p'(s)\,\dd s,
\end{equation*}
we obtain
\begin{equation*}
\int_{\mathcal{T}^1}\vr \partial_y p(\vr)\,\dd y = 0.
\end{equation*}
Since we have
\begin{equation*}
\int_{\mathcal{T}^1}\frac{\vr}{h^2}\,\dd y>0\quad\text{and}\quad
\int_{\mathcal{T}^1}\frac1{h^3}\,\dd y>0,
\end{equation*}
we conclude from \eqref{U33} that \(\lambda<0\).
Now we prove that \(\vr>0\). We proceed by contradiction; we assume that
there exists \(y_0\in\mathcal{T}^1\) such that
\begin{equation}
\label{U34}
\vr(y_0)=0.
\end{equation}
We may deduce from \eqref{U3} and \(\partial_yp(\vr)\in\L^2(\mathcal{T}^1)\) that
\(\vr(y)>0\) a.e. \(y\in\mathcal{T}^1\). Then it follows from \eqref{U33} that
\begin{equation}
\label{U45}
\vr'=
\Bigl(
\frac{6\mu s}{h^2}+\frac{12\mu\lambda}{h^3\vr}
\Bigr)\frac1{p'(\vr)}\quad\text{a.e. on}\quad\mathcal{T}^1.
\end{equation}
We observe that we can take \(y_0\leq 1\). Since \(\vr\) is a continuous function,
\(\vr(y_0)=0\) and \(\lambda<0\), it comes by using \eqref{U45} that there exists
\(\eta>0\) small enough satisfying \(y_0+\eta\in\mathcal{T}^1\) such that
\(\vr'(y)<0\) a.e. \(y\in(y_0,y_0+\eta)\). But
\begin{equation*}
\vr(y_0+\eta)=\int_{y_0}^{y_0+\eta} \vr'(y)\,\dd y<0
\end{equation*}
gives the contradiction.
We conclude that \(\vr>0\).
Accordingly, equation \eqref{U3} becomes a non--degenerate first order non--linear ODE
with $\C^1$ non--linearities.
Next, we claim that for a given $\lambda<0$,
there is at most one periodic solution \(\vr\) of \eqref{U3} satisfying also
\begin{equation}
\label{U36}
\int_{\mathcal{T}^1} \vr h\,\dd y = M.
\end{equation}
Indeed, two possible such solutions do not intersect, therefore
there exists exactly one of them for which \eqref{U36} holds.
Let now $\vr_1$ and $\vr_2$ - two periodic solutions - of \eqref{U3} corresponding to two
different constants \(\lambda_1\neq \lambda_2\) and satisfying \eqref{U36}.
To fix the ideas, we suppose that \(\lambda_2>\lambda_1\). It follows from \(p'>0\) on
\((0,\bar\vr)\) and \eqref{U3} that
\begin{equation}
\label{U37}
\vr_1'(y)>\vr'_2(y)\text{ for any }y\in[0,1]\text{ such that }\vr_1(y)=\vr_2(y)>0.
\end{equation}
Since \eqref{U36} holds, there exists \(\alpha\in (0,1)\) such that \(\vr_1(\alpha)=\vr_2(\alpha)\).
We conclude from \eqref{U37} that we have
\begin{equation*}
\forall y\in [0,\alpha): \vr_1(y)>\vr_2(y)\quad\text{and}\quad
\forall y\in (\alpha,1]: \vr_1(y)<\vr_2(y),
\end{equation*}
which contradicts the periodicity of \(\vr_1\) and \(\vr_2\).

\appendix

\section{Auxiliary results, Bogovskii's operator, extension lemma}
\label{a}

\subsection{Bogovskii's operator}

 The goal is to find a solution branch of the obviously undetermined problem,
\begin{equation}
 \label{a1}
\Div \vc{B} = f - \frac{1}{|\Omega|} \intO{ f }\quad\text{in}\quad\Omega
\quad\text{and}\quad\vc{B}=0\quad\text{on}\quad{\partial \Omega},
\end{equation}
where $\Omega$ is a bounded Lipschitz domain in $\Er^N$.

In this paper, we consider the operator $\vc{B} = \vc{B}[f]$
proposed by Bogovskii \cite{BOG}.
The following properties of $\vc{B}$ were proved
in the monograph by Galdi \cite{GAL}:

\begin{itemize}
\item
The operator $f \mapsto \vc{B}[f]$ is linear, a priori defined from
$\C^{\infty}(\bar{\Omega})\) to \( \C^{\infty}_c(\Omega)$.
\item $\vc{B}$ can be extended to functions $f \in \L^p(\Omega)$,
\begin{equation}
\label{a2}
\| \vc{B}[f] \|_{\W^{1,p}_0 (\Omega)} \leq c(p) \| f \|_{\L^p(\Omega)}\quad\text{for any}\quad
1< p < \infty.
\end{equation}

\item If, in addition, $f = \Div (\vc{g})$, where $g \in \L^q$ such that
$\vc{g} \cdot \vc{n}= 0$ on \(\partial\Omega\) holds, then
\begin{equation}
\label{a3}
\| \vc{B}[\Div (\vc{g})] \|_{\L^q(\Omega)} \leq c(q) \| \vc{g} \|_{\L^q(\Omega)}.
\end{equation}

\end{itemize}

\subsection{Extension operator}

Next, we claim the following extension lemma.
\begin{lemma}
\label{aL1}
Let $\Omega \subset \Er^N$, $N=2,3$, be a bounded Lipschitz domain.
Let $\bar{\vu}\in\W^{1,p}(\Omega)$ be given such that $p\in]0,+\infty[$ and
$\bar{\vu} \cdot \vc{n}= 0$ on \(\partial \Omega\). Let \(q\) be given such that
\(1< q < \frac{Np}{N - p}\) if \(p < N\)
and \(1 < q\) arbitrary finite otherwise. Then for any $\delta > 0$, there exists
$\bar{\vu}_\delta \in \W^{1,p}(\Omega)$ with the following properties:
\begin{itemize}
\item \(\bar{\vu}_\delta = \bar{\vu}\) on \(\partial \Omega\) in the sense of traces
and \(\Div \bar{\vu}_\delta = 0\) in \(\Omega\),

\item \(\| \bar{\vu}_\delta \|_{\L^q(\Omega)} < \delta\)
and \(\| \bar{\vu}_\delta \|_{\W^{1,p}(\Omega)}
\leq c(\delta, p,q) \| \bar{\vu} \|_{\W^{1,p}(\Omega)}\),

\end{itemize}

\end{lemma}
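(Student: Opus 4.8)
The plan is to build $\bar{\mathbf u}_\delta$ in two stages: first replace $\bar{\mathbf u}$ by a genuinely solenoidal extension $\mathbf w$ of its boundary trace, then "squeeze" $\mathbf w$ into a boundary strip of width $\sigma=\sigma(\delta)$ while keeping it divergence free and keeping its trace on $\partial\Omega$. The point of the hypothesis $\bar{\mathbf u}\cdot\mathbf n=0$ is exactly that it makes this squeezing compatible with the divergence constraint: it forces the relevant potential to vanish on $\partial\Omega$, so that the error created by cutting the potential off is dominated by a Hardy quotient which goes to $0$ in $\L^q$ as $\sigma\to0$. Observe at the outset that only the $\L^q$-smallness has to be quantitative; the $\W^{1,p}$-bound is allowed a constant $c(\delta,p,q)$ that may degenerate as $\delta\to0$, so that half of the statement is essentially free. (Throughout $p>1$, as is needed for the Bogovskii and Hardy estimates and holds in all applications.)

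First I would set $g\eqldef\Div\bar{\mathbf u}\in\L^p(\Omega)$; since $\int_\Omega g=\int_{\partial\Omega}\bar{\mathbf u}\cdot\mathbf n=0$, the Bogovskii operator of Section~\ref{a} gives $\mathbf B[g]\in\W^{1,p}_0(\Omega)$ with $\Div\mathbf B[g]=g$ and $\|\mathbf B[g]\|_{\W^{1,p}}\le c\|g\|_{\L^p}$. Put $\mathbf w\eqldef\bar{\mathbf u}-\mathbf B[g]$: then $\Div\mathbf w=0$, $\mathbf w=\bar{\mathbf u}$ on $\partial\Omega$ (hence $\mathbf w\cdot\mathbf n=0$), $\|\mathbf w\|_{\W^{1,p}}\le c\|\bar{\mathbf u}\|_{\W^{1,p}}$, and by the Sobolev embedding $\W^{1,p}\hookrightarrow\L^q$ — legitimate for the stated range of $q$ — one has $\mathbf w\in\L^q(\Omega)$. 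Since $\mathbf w$ is solenoidal with zero flux through each component of $\partial\Omega$, classical potential theory provides a vector potential $\mathbf w=\curl\mathbf A$, $\mathbf A\in\W^{2,p}(\Omega)$ (a scalar stream function if $N=2$), and the usual gauge choice lets one take $\mathbf A$ with vanishing tangential trace on $\partial\Omega$ (when $N=2$, $\mathbf A$ itself constant on each component, normalised to zero) and $\|\mathbf A\|_{\W^{2,p}}\le c\|\mathbf w\|_{\W^{1,p}}$. In a collar of $\partial\Omega$ the tangential part $\mathbf A_\tau$ then lies in $\W^{1,q}$ and vanishes on $\partial\Omega$, so Hardy's inequality yields $\mathbf A_\tau/d\in\L^q(\Omega)$ with $d(x)\eqldef\dist{(x,\partial\Omega)}$; crucially this is valid all the way up to the Sobolev exponent, because $\nabla\mathbf A$ — unlike $\mathbf w$ itself — has gained a derivative and hence belongs to $\L^q$ for every admissible $q$.

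Then, for small $\sigma>0$, I would take $\theta_\sigma\in\C^\infty(\bar\Omega)$ with $\theta_\sigma\equiv1$ near $\partial\Omega$, $\supp\theta_\sigma\subset\Omega_\sigma\eqldef\{d<\sigma\}$, $|\nabla\theta_\sigma|\le c\sigma^{-1}$, $|\nabla^2\theta_\sigma|\le c\sigma^{-2}$, and set $\bar{\mathbf u}_\sigma\eqldef\curl(\theta_\sigma\mathbf A)=\theta_\sigma\mathbf w+\nabla\theta_\sigma\times\mathbf A$. By construction $\Div\bar{\mathbf u}_\sigma=0$ and $\bar{\mathbf u}_\sigma=\curl\mathbf A=\bar{\mathbf u}$ on $\partial\Omega$, since $\theta_\sigma\equiv1$ there. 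As $\nabla\theta_\sigma$ is parallel to $\nabla d$ one has $\nabla\theta_\sigma\times\mathbf A=\nabla\theta_\sigma\times\mathbf A_\tau$, whence $\|\bar{\mathbf u}_\sigma\|_{\L^q}\le\|\mathbf w\|_{\L^q(\Omega_\sigma)}+c\,\sigma^{-1}\|\mathbf A_\tau\|_{\L^q(\Omega_\sigma)}\le\|\mathbf w\|_{\L^q(\Omega_\sigma)}+c\|\mathbf A_\tau/d\|_{\L^q(\Omega_\sigma)}\to0$ as $\sigma\to0$, by absolute continuity of the integral. Expanding $\nabla^2(\theta_\sigma\mathbf A)$ and using $\mathbf A\in\W^{2,p}$ gives $\|\bar{\mathbf u}_\sigma\|_{\W^{1,p}}\le c(\sigma)\|\bar{\mathbf u}\|_{\W^{1,p}}$ with $c(\sigma)<\infty$ for each $\sigma$ (blowing up at worst like $\sigma^{-2}$, which is harmless here). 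Picking $\sigma(\delta)$ with $\|\bar{\mathbf u}_{\sigma(\delta)}\|_{\L^q}<\delta$ and setting $\bar{\mathbf u}_\delta\eqldef\bar{\mathbf u}_{\sigma(\delta)}$, $c(\delta,p,q)\eqldef c(\sigma(\delta))$, finishes the proof.

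The hard part is the potential construction of the previous paragraph — producing $\mathbf A$ with $\W^{2,p}$ regularity and, above all, a gauge for which the tangential trace vanishes: immediate when $\partial\Omega$ is connected, but in the multiply connected case one must dispose of the finite-dimensional space of harmonic Neumann fields, equivalently subtract on each component of $\partial\Omega$ the appropriate constant stream value by means of a partition of unity subordinate to the finitely many components of $\partial\Omega$. For the $\C^{2+\nu}$ domains relevant to Theorems~\ref{ET1} and~\ref{ET2} this is entirely classical, and the only other delicate point, the validity of the Hardy step up to the Sobolev exponent, has been dealt with above.
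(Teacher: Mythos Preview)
Your argument is correct for the $\C^{2+\nu}$ domains actually used in the paper, but the paper's own proof is considerably shorter and avoids vector potentials entirely. The paper first takes any extension $\tilde{\vu}_\delta$ of the boundary trace (not assumed solenoidal) with $\|\tilde{\vu}_\delta\|_{\L^q}<\delta$ and $\|\tilde{\vu}_\delta\|_{\W^{1,p}}\le c(\delta)\|\bar{\vu}\|_{\W^{1,p}}$ --- a standard boundary--layer construction --- and then removes the divergence in one stroke: $\bar{\vu}_\delta=\tilde{\vu}_\delta-\vc{B}[\Div\tilde{\vu}_\delta]$. The whole point is the negative--order estimate \eqref{a3} on the Bogovskii operator, $\|\vc{B}[\Div\vc{g}]\|_{\L^q}\le c(q)\|\vc{g}\|_{\L^q}$ whenever $\vc{g}\cdot\vc{n}=0$, applied with $\vc{g}=\tilde{\vu}_\delta$: this keeps the correction small in $\L^q$ without ever writing a curl. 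Your route trades that single Bogovskii estimate for a $\W^{2,p}$ vector--potential representation plus a Hardy inequality; it gives you an explicit picture (support squeezed into a strip of width $\sigma$) but at the cost of elliptic regularity for the potential, which is precisely why you must retreat from Lipschitz to $\C^{2+\nu}$. The paper's argument works on Lipschitz domains as stated, since both the Bogovskii bounds \eqref{a2}--\eqref{a3} and the thin--layer trace extension hold there.
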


\begin{proof}
As $\W^{1,p}(\Omega) \hookrightarrow \L^q(\Omega)$,
it is easy to construct an extension $\tilde{\vu}_\delta$ such that
\begin{equation*}
\tilde{\vu}_\delta = \bar{\vu} \quad\text{on}\quad\partial \Omega,\quad
\| \tilde{\vu}_\delta \|_{\L^q(\Omega)}
< \delta \quad\text{and}\quad
 \| \tilde{\vu}_\delta \|_{\W^{1,p} (\Omega)} \leq c(\delta, p,q) \| \bar{\vu} \|_{\W^{1,p}(\Omega)}.
\end{equation*}
Now, consider the Bogovkii's operator solving
\begin{equation*}
\Div (\vc{B}) = \Div (\tilde{\vu}_\delta)\quad\text{and}\quad\vc{B}= 0\quad\text{on}\quad\partial\Omega.
\end{equation*}
It follows from the basic properties of the operator $\vc{B}$ stated in \eqref{a2} and \eqref{a3}
that
\begin{equation*}
\bar{\vu}_\delta = \tilde{\vu}_\delta - \vc{B}
\end{equation*}
satisfies
\begin{subequations}
\begin{align}
&\bar{\vu}_\delta = \bar{\vu}\quad\text{on}\quad\partial \Omega
\quad\text{and}\quad\Div (\bar{\vu}_\delta) = 0 \quad\text{in}\quad \Omega,
\\&
\| \bar{\vu} \|_{\W^{1,p}(\Omega)}\leq \| \tilde{\vu}_{\delta} \|_{\W^{1,p}(\Omega)}
+ \| \vc{B} \|_{\W^{1,p}(\Omega)}\\&\notag \leq c_1(\delta,p,q) \bigl(\| \tilde{\vu}_{\delta} \|_{\W^{1,p}
(\Omega)} +
\| \Div \tilde{\vu}_\delta \|_{\L^p(\Omega)} \bigr)
\leq c_2(\delta,p,q) \| \tilde{\vu}_{\delta} \|_{\W^{1,p}(\Omega)},
\\&
\| \bar{\vu}_\delta \|_{\L^q(\Omega)} \leq \| \tilde{\vu}_\delta \|_{\L^q(\Omega)} +
\| \vc{B} \|_{\L^q(\Omega)} \leq \delta + c(q) \| \tilde{\vu}_\delta \|_{\L^q(\Omega)}
\leq \delta (1 + c(q)).
\end{align}
\end{subequations}\\
\end{proof}

\subsection{Anisotropic interpolation}

Consider the domain
\begin{equation*}
\Omega\eqldef\bigl{\{} x = (x_h,z):\, x_h \in \mathcal{T}^2 ,\ 0 < z < h(x_h) \bigr{\}},
\end{equation*}
where $h$ is a Lipshitz function, see \cite{ABC94IETF}.
Then we have
\begin{equation}
\label{a4}
\| \vc{v} \|_{\L^4(\Omega)} \leq \bigl(\| \nabla_h \vc{v} \|_{\L^2(\Omega)}
+ \| \vc{v} \|_{\L^2(\Omega)} \bigr)^{\frac{1}{2}} \bigl(\| \partial_Z \vc{v} \|_{\L^2(\Omega)}
+ \| \vc{v} \|_{\L^2(\Omega)}\bigr)^{\frac{1}{2}}
\end{equation}
for any $\vc{v} \in\W^{1,2}(\Omega)$ such that
$\vc{v}(\cdot,z = h(x))= 0$ and $\vc{v} \cdot \vc{n}= 0$ on \(\partial \Omega\) hold.

\section{Existence and uniqueness results to the approximate problem}
\label{AppendixA}

We prove here the following lemma
\begin{lemma}
\label{exists_uniqueness}
Assume that \(\mathbf{u}\in\W^{1,2}(\Omega)\) such that \(\mathbf{u}\cdot \mathbf{n}=0\) on
\(\partial\Omega\) holds and \(g\in\L^2(\Omega)\).
Then there exists a unique solution \(\vr\in\W^{2,2}(\Omega)\)
such that \(\vr\geq 0\) to problem
\begin{subequations}
\label{appendixA1}
\begin{align}
&\delta\vr-\delta\Delta
\vr+\dive(T(\vr)\mathbf{u})=g\quad\text{in}\quad\Omega,\\
&\nabla\vr\cdot\mathbf{n}=0\quad\text{on}\quad\partial\Omega.
\end{align}
\end{subequations}
Furthermore, we have the following comparison principle:
for any \(g_1\in\L^2(\Omega)\) and \(g_2\in\L^2(\Omega)\) with \(g_1\geq g_2\),
we have \(\vr_1\geq \vr_2\) where \(\vr_1\) and \(\vr_2\) are the solutions to problem
\eqref{appendixA1} corresponding to \(g_1\) and \(g_2\), respectively.
\end{lemma}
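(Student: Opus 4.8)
The plan is to obtain existence by freezing the truncation $T(\vr)$ and applying a fixed--point theorem to the resulting \emph{linear} Neumann problem, then to upgrade the regularity to $\W^{2,2}(\Omega)$ by a bootstrap, and finally to deduce the comparison principle (and hence uniqueness) by testing the equation for the difference of two solutions with a regularised Heaviside function. For the fixed--point step, note that for $\tilde\vr\in\W^{1,2}(\Omega)$ one has $T(\tilde\vr)\mathbf{u}\in\L^2(\Omega)$ with $\|T(\tilde\vr)\mathbf{u}\|_{\L^2(\Omega)}\le\bar\vr\|\mathbf{u}\|_{\L^2(\Omega)}$ \emph{uniformly} in $\tilde\vr$, since $0\le T\le\bar\vr$. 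Hence, after integrating the transport term by parts and using $\mathbf{u}\cdot\mathbf{n}=0$, the weak problem
\[
\delta\int_\Omega\vr\,\varphi\,\dd x+\delta\int_\Omega\nabla\vr\cdot\nabla\varphi\,\dd x=\int_\Omega g\,\varphi\,\dd x+\int_\Omega T(\tilde\vr)\,\mathbf{u}\cdot\nabla\varphi\,\dd x,\qquad\varphi\in\W^{1,2}(\Omega),
\]
has a unique solution $\vr=:\Phi(\tilde\vr)\in\W^{1,2}(\Omega)$ by the Lax--Milgram theorem, the bilinear form $(\vr,\varphi)\mapsto\delta\int_\Omega(\vr\varphi+\nabla\vr\cdot\nabla\varphi)\,\dd x$ being coercive on $\W^{1,2}(\Omega)$.

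The map $\Phi:\W^{1,2}(\Omega)\to\W^{1,2}(\Omega)$ is continuous and compact: if $\tilde\vr_n\rightharpoonup\tilde\vr$ weakly in $\W^{1,2}(\Omega)$, then $\tilde\vr_n\to\tilde\vr$ in $\L^2(\Omega)$ and a.e. along a subsequence, whence $T(\tilde\vr_n)\mathbf{u}\to T(\tilde\vr)\mathbf{u}$ in $\L^2(\Omega)$ by dominated convergence (dominating function $2\bar\vr|\mathbf{u}|$), so $\Phi(\tilde\vr_n)\to\Phi(\tilde\vr)$ strongly in $\W^{1,2}(\Omega)$ by the continuous dependence in Lax--Milgram. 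Any fixed point $\vr$ of $\theta\Phi$, $\theta\in[0,1]$, solves $\delta\vr-\delta\Delta\vr=\theta g-\theta\dive(T(\vr)\mathbf{u})$; testing with $\vr$ and using $0\le T\le\bar\vr$ together with Young's inequality gives $\|\vr\|_{\W^{1,2}(\Omega)}\le C(\delta,\|g\|_{\L^2(\Omega)},\|\mathbf{u}\|_{\L^2(\Omega)})$, independently of $\theta$. Lemma \ref{fix_point} then provides a fixed point $\vr\in\W^{1,2}(\Omega)$ of $\Phi$, i.e.\ a weak solution of \eqref{appendixA1}.

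Next I would bootstrap the regularity. Writing $\dive(T(\vr)\mathbf{u})=T(\vr)\dive\mathbf{u}+T'(\vr)\,\mathbf{u}\cdot\nabla\vr$ (valid since $T$ is Lipschitz, with $|T'|\le1$ a.e.), the right--hand side of $\delta\vr-\delta\Delta\vr=g-\dive(T(\vr)\mathbf{u})$ lies a priori in $\L^{3/2}(\Omega)$ when $N=3$ (as $\nabla\vr\in\L^2$, $\mathbf{u}\in\L^6$), so standard elliptic regularity for the Neumann Laplacian gives $\vr\in\W^{2,3/2}(\Omega)\hookrightarrow\W^{1,3}(\Omega)$; then $\mathbf{u}\cdot\nabla\vr\in\L^2(\Omega)$, a second application yields $\vr\in\W^{2,2}(\Omega)$ (when $N=2$ one step suffices), the equation \eqref{appendixA1} now holds a.e., and one obtains the corresponding $\W^{2,2}$--bound. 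For non--negativity (which needs $g\ge0$, as in the application where $g=\delta\vr_M$) I would test with the negative part $-\vr^-\in\W^{1,2}(\Omega)$: since $T(\vr)=0$ on $\{\vr<0\}$ and $\nabla\vr^-$ is supported there, the transport term vanishes and we are left with $\delta\|\vr^-\|_{\L^2(\Omega)}^2+\delta\|\nabla\vr^-\|_{\L^2(\Omega)}^2=-\int_\Omega g\,\vr^-\,\dd x\le0$, forcing $\vr^-=0$.

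Finally, the comparison principle, which in particular yields uniqueness on taking $g_1=g_2$. For solutions $\vr_1,\vr_2$ with data $g_1\ge g_2$ set $w=\vr_2-\vr_1$, so that $\delta w-\delta\Delta w+\dive((T(\vr_2)-T(\vr_1))\mathbf{u})=g_2-g_1\le0$. For $k>0$ take the Lipschitz function $\phi_k(s)=\min(s^+/k,1)$, test with $\phi_k(w)\ge0$, and discard the non--negative terms $\delta\int_\Omega w\,\phi_k(w)\,\dd x\ge0$ and $\delta\int_\Omega\phi_k'(w)|\nabla w|^2\,\dd x\ge0$ against the sign of the source; this gives
\[
\delta\int_\Omega w\,\phi_k(w)\,\dd x+\delta\int_\Omega\phi_k'(w)\,|\nabla w|^2\,\dd x\le\int_\Omega\bigl(T(\vr_2)-T(\vr_1)\bigr)\,\mathbf{u}\cdot\phi_k'(w)\,\nabla w\,\dd x.
\]
Here $|T(\vr_2)-T(\vr_1)|\le|w|$ ($T$ being $1$--Lipschitz and non--decreasing) and $w^2\phi_k'(w)\le k$ on $\{0<w<k\}$, so by Cauchy--Schwarz the right--hand side is bounded by $\bigl(k\|\mathbf{u}\|_{\L^2(\Omega)}^2\bigr)^{1/2}\bigl(\int_\Omega\phi_k'(w)|\nabla w|^2\,\dd x\bigr)^{1/2}$; this first forces $\int_\Omega\phi_k'(w)|\nabla w|^2\,\dd x\to0$ and then $\delta\int_\Omega w\,\phi_k(w)\,\dd x\to0$ as $k\to0$. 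Since $w\,\phi_k(w)\uparrow w^+$ monotonically, the monotone convergence theorem gives $\int_\Omega w^+\,\dd x=0$, i.e.\ $\vr_2\le\vr_1$. The main obstacle is precisely this last step: the transport term $\dive(T(\vr)\mathbf{u})$ rules out a naive energy/monotonicity proof of uniqueness (which would only work for $\|\mathbf{u}\|$ small relative to $\delta$), and one has to exploit \emph{simultaneously} the $1$--Lipschitz bound on $T$ and the concentration of $\phi_k'$ near the free boundary $\{w=0\}$ so that the perturbation disappears in the limit $k\to0$; the $\W^{2,2}$--bootstrap in dimension $3$ likewise requires the Sobolev chain above rather than a one--shot estimate.
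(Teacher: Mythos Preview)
Your proof is correct and follows the same approach as the paper: a fixed-point argument on the frozen linear problem (you use Schaeffer, the paper uses Schauder), an elliptic bootstrap, and a comparison principle via a regularised sign function (you take $\phi_k(s)=\min(s^+/k,1)$, the paper takes $\varphi(s)=s/(s+\eta)$, both exploiting $s^2\varphi'(s)\le C\,\text{parameter}$ to kill the transport term). One minor slip: in $N=2$ the bootstrap also needs two steps since $\W^{1,2}\not\hookrightarrow\L^\infty$ (first to $\W^{2,p}$ for $p<2$, then to $\W^{2,2}$), but this is easily fixed and your treatment of the $\W^{2,2}$ regularity is in fact more explicit than the paper's.
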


\begin{proof}
The existence results comes from the Schauder's fixed point theorem. To this aim,
let \(\mathcal{S}\) be the following mapping:
\begin{equation*}
\begin{aligned}
\mathcal{S}:\,\W^{1,2}(\Omega)&\rightarrow \W^{1,2}(\Omega)\\
\tilde{\vr}&\mapsto \vr=\mathcal{S}(\tilde{\vr})
\end{aligned}
\end{equation*}
where \(\vr\) is assumed to be a solution to the problem
\begin{equation*}
\begin{aligned}
&\delta\vr-\delta\Delta
\vr+\dive(T(\tilde{\vr})\mathbf{u})=g\quad\text{in}\quad\Omega,\\
&\nabla\vr\cdot\mathbf{n}=0\quad\text{on}\quad\partial\Omega.
\end{aligned}
\end{equation*}
Since \(\vu\cdot\mathbf{n}=0\) on \(\partial\Omega\), the variational formulation for \(\vr\)
is given by
\begin{equation*}
\forall \psi\in\W^{1,2}(\Omega):\,
\delta\int_{\Omega}\vr\psi\,\dd x+\delta\int_{\Omega}\nabla\vr\nabla\psi\,\dd x-
\int_{\Omega}(T(\tilde{\vr})\vu)\cdot\nabla\psi\,\dd x=\int_{\Omega}g\psi\,\dd x,
\end{equation*}
with \(\vr\in\W^{1,2}(\Omega)\). Observe that
\begin{equation*}
\norm[\L^2(\Omega)]{T(\tilde{\vr})\mathbf{u}}\leq\bar{\vr}\norm[\L^2(\Omega)]{\vu},
\end{equation*}
which implies by using Lax--Milgram theorem that there exists a constant \(C\geq 0\)
independent of \(\tilde{\vr}\) such that
\begin{equation*}
\norm[\W^{1,2}(\Omega)]{\mathcal{S}(\tilde{\vr})}\leq C.
\end{equation*}
Then \(\mathcal{S}(\bar{B}_{\W^{1,2}(\Omega)}(0,C))\subset
\bar{B}_{\W^{1,2}(\Omega)}(0,C)\)
where \(\bar{B}_{\W^{1,2}(\Omega)}(0,C)\) denotes a closed ball in \(\W^{1,2}(\Omega)\)
with radius \(C\). Since \(T\in\W^{1,\infty}(\Er)\) and \(\vu\in\L^6(\Omega)\),
we may deduce that \(\dive(T(\tilde{\vr})\vu)\) is bounded in \(\L^{\frac32}(\Omega)\)
by a constant independently of \(\tilde{\vr}\). It follows by using a regularity argument that
\(\vr\) is bounded in \(\W^{2,\frac32}(\Omega)\) and it comes from the Sobolev compactness that
\(\mathcal{S}(\bar{B}_{\W^{1,2}(\Omega)}(0,C))\) is relatively compact in \(\W^{1,2}(\Omega)\).
On the other hand, assume that \(\tilde{\vr}_n\) converges strongly to \(\tilde{\vr}\) in
\(\W^{1,2}(\Omega)\) as \(n\) tends to \(+\infty\). Since \(T\) is Lipschitz continuous and
\(\W^{1,2}(\Omega)\hookrightarrow\L^6(\Omega)\), it is clear that
\begin{equation*}
\norm[\L^6(\Omega)]{T(\tilde{\vr}_n)-T(\tilde{\vr})}\rightarrow 0.
\end{equation*}
Since \(\vu\in\L^3(\Omega)\), we may infer that
\(\vr_n\) converges to \(\vr\) in
\(\W^{1,2}(\Omega)\) as \(n\) tends to \(+\infty\) and then \(\mathcal{S}\) is continuous.
Hence we may conclude by using Schauder's fixed point theorem
(see \cite{Evans90PDE}) that \eqref{appendixA1} possesses a solution.

We establish now the comparison principle which gives also the uniqueness result
associated to our problem.
We introduce the following notations:
\(\vr\eqldef \vr_1-\vr_2\) and \(g\eqldef g_1-g_2\), then we have
\begin{subequations}
\label{appendixA2}
\begin{align}
\label{appendixA21}
&\delta\vr-\delta\Delta
\vr+\dive((T(\vr_1)-T(\vr_2))\mathbf{u})=g\quad\text{in}\quad\Omega,\\
\label{appendixA22}
&\nabla\vr\cdot\mathbf{n}=0\quad\text{on}\quad\partial\Omega,
\end{align}
\end{subequations}
where \(g\) is a positive function.  Let us define \(\varphi\in\W^{1,\infty}(\Er)\) such that
\(\varphi(0)=0\) and \(\varphi'(\zeta)>0\) for all \(\zeta\in[0,+\infty)\). Furthermore, \((\cdot)^{+}\)
and \((\cdot)^{-}\) denote the positive and negative part of \((\cdot)\), respectively.
Multiplying \eqref{appendixA21} by \(\varphi(\vr^-)\) and integrating over \(\Omega\) the resulting
identity, we find
\begin{equation*}
\delta\int_{\Omega}\vr^-\varphi(\vr^-)\,\dd x
+\delta\int_{\Omega}\varphi'(\vr^-)\abs{\nabla\vr^-}^2\,\dd x
\leq \int_{\Omega}\abs{\mathbf{u}}\vr^-\varphi'(\vr^-)\abs{\nabla\vr^-}\,\dd x,
\end{equation*}
which leads to
\begin{equation*}
\delta\int_{\Omega}\vr^-\varphi(\vr^-)\,\dd x
\leq\frac1{2\delta}\int_{\Omega}\abs{\vu}^2(\vr^-)^2\varphi'(\vr^-)\,\dd x.
\end{equation*}
Choosing \(\varphi(\zeta)=\frac{\zeta}{\zeta+\eta}\), we have
\begin{equation*}
\int_{\Omega}\frac{(\vr^-)^2}{\eta+\vr^-}\,\dd x\leq \frac{\eta}{2\delta^2}\int_{\Omega}
\abs{\mathbf{u}}^2\,\dd x.
\end{equation*}
We may deduce that \(\int_{\Omega}\vr^-\,\dd x=0\)
which ends the proof of the comparison principle.\\
\end{proof}

\renewcommand{\arraystretch}{0.91}{\small{
\paragraph*{Acknowledgments}
The research of E.~F. leading to these results has received funding from the
European Research Council under the European Union's Seventh
Framework Programme (FP7/2007-2013)/ ERC Grant Agreement
320078. The Institute of Mathematics of the Academy of Sciences of
the Czech Republic is supported by RVO:67985840.
This research was performed during the stay of E.~F. as invited professor at the INSA--Lyon.

\end{document}